\numberwithin{equation}{section}
\theoremstyle{plain}
\newtheorem{dfn}{Definition}[section]
\newtheorem{lma}[dfn]{Lemma}
\newtheorem{nota}[dfn]{Notations}
\newtheorem{prb}[dfn]{Problem}
\newtheorem{crl}[dfn]{Corollary}
\newtheorem{rmrk}[dfn]{Remark}
\newtheorem{thm}[dfn]{Theorem}
\newtheorem{case}{Case}
\newtheorem{subcase}{Subcase}
\numberwithin{subcase}{case}
\DeclarePairedDelimiterX{\norm}[1]{\lVert}{\rVert}{#1}
\DeclarePairedDelimiterX{\bnorm}[1]{\big\lVert}{\big\rVert}{#1}
\DeclarePairedDelimiterX{\Bnorm}[1]{\Big\lVert}{\Big\rVert}{#1}
\newcommand{\R}{\mathbb{R}}
\newcommand{\N}{\mathbb{N}}
\newcommand{\C}{\mathbb{C}}
\newcommand{\Cn}{\mathbb{C}^n}
\newcommand{\D}{\mathbb{D}_{1+\epsilon}^{n}}
\newcommand{\Ra}{(-1-\epsilon, 1+\epsilon)^{n}}
\newcommand{\hil}{\mathcal{H}}
\newcommand{\hils}{\mathcal{B}_2(\hil)}
\newcommand{\boh}{\mathcal{B}_1(\hil)}
\newcommand{\bh}{\mathcal{B}(\hil)}
\newcommand{\bph}{\mathcal{B}_p(\hil)}
\newcommand{\Tr}{\operatorname{Tr}}
\newcommand{\ii}{\operatorname{i}}
\newcommand{\tr}{\operatorname{\tau}}
\newcommand{\mL}{\mathcal{L}_{1}}
\newcommand{\mLL}{\mathcal{L}_{2}}
\newcommand{\Lpt}{\mathcal{L}_{p}(\mathcal{M}, \tr)}
\newcommand{\dmds}{\dfrac{d^m}{ds^m}}
\newcommand{\dmmds}{\dfrac{d^{m-1}}{ds^{m-1}}}
\newcommand{\drds}{\dfrac{d^r}{ds^r}}
\newcommand{\drrds}{\dfrac{d^{r-1}}{ds^{r-1}}}
\newcommand{\ddt}{\dfrac{d}{dt}}
\newcommand{\cir}{\mathbb{T}}
\newcommand{\Mcal}{\mathcal{M}}
\newcommand{\A}{\textbf{A}_{n}}
\newcommand{\B}{\textbf{B}_{n}}
\newcommand{\X}{\textbf{X}_{n}}
\newcommand{\V}{\textbf{V}_{n}}
\newcommand{\mK}{\mathcal{K}}
\newcommand{\Cmn}{\normalfont{\text{Com}_{n}}}
\newcommand{\cn}{\mathcal{C}_{n}}
\begin{document}
\title[Estimates and Higher-Order Spectral Shift Measures]
{Estimates and Higher-Order Spectral Shift Measures in Several Variables}
	
\author[Chattopadhyay] {Arup Chattopadhyay}
\address{Department of Mathematics, Indian Institute of Technology Guwahati, Guwahati, 781039, India}
\email{arupchatt@iitg.ac.in, 2003arupchattopadhyay@gmail.com}

\author[Giri] {Saikat Giri}
\address{Department of Mathematics, Indian Institute of Technology Guwahati, Guwahati, 781039, India}
\email{saikat.giri@iitg.ac.in, saikatgiri90@gmail.com}

\author[Pradhan]{Chandan Pradhan}
\address{Department of Mathematics, Indian Institute of Technology Guwahati, Guwahati, 781039, India}
\email{chandan.pradhan2108@gmail.com}

\subjclass[2010]{{ Primary 47A55; Secondary 47A13, 47B10}}
	
\keywords{{ Spectral shift measures, multivariate operator functions, commuting contractions}}
	
\begin{abstract}
In recent years, higher-order trace formulas of operator functions have attracted considerable attention to a large part of the perturbation theory community. In this direction, we prove estimates for traces of higher-order derivatives of multivariable operator functions with associated scalar functions arising from multivariable analytic function space and, as a consequence, derive higher-order spectral shift measures for pairs of tuples of commuting contractions under Hilbert-Schmidt perturbations. These results substantially extend the main results of \cite{Sk15}, where the estimates were proved for traces of first and second-order derivatives of multivariable operator functions. In the context of the existence of higher-order spectral shift measures, our results extend the relative results of \cite{DySk09, PoSkSu14} from a single-variable to a multivariable setting under Hilbert-Schmidt perturbations. Our results rely crucially on heavy uses of explicit expressions of higher-order derivatives of operator functions and estimates of the divided deference of multivariable analytic functions, which are developed in this paper, along with the spectral theorem of tuples of commuting normal operators.
\end{abstract}
	
\maketitle
\section{Introduction}\label{Sec1}
Let $\bh$ be the the algebra of bounded linear operators acting on a complex separable Hilbert space $\hil$, endowed with the operator norm. Let $\Mcal\subseteq\bh$ be a semifinite von Neumann algebra and $\tr$ be a semifinite normal faithful trace on $\Mcal$. Note that, example of such $(\Mcal, \tau)$ is $(\bh, \Tr)$, where $\Tr$ is the canonical trace. For a pair of operators $(T,V)$ affiliated with $\Mcal$, a nice function $f:\C\to\C$, and a natural number $n$, the $n$-th order Taylor remainder is denoted by $\mathscr{R}_{n,T,f}(V)$ and is defined by
\begin{align}\label{TR}
\mathscr{R}_{n,T,f}(V):=f(T+V)-\sum_{k=0}^{n-1}\frac{1}{k!}\,\dfrac{d^k}{ds^k}\bigg|_{s=0}f(T+sV).
\end{align} 

In perturbation theory, one is always interested in the behavior of the Taylor remainders for the  functional $V\mapsto\tr(f(T+V)),$ where $T$ is either self-adjoint operator affiliated with $\Mcal$ or a bounded operator in $\Mcal$, $V\in\Mcal$ and $f:\C\to\C$ a nice function so that the above functional make sense. It is interesting to note that, the behavior of the first-order Taylor remainder of the above functional is described by a complex Borel measure $\mu$ on a suitable subset $\Omega\subseteq\C$ satisfying
\begin{align}\label{TR1}
\tr(\mathscr{R}_{1,T,f}(V))=\int_{\Omega}f'(z)d\mu(z).
\end{align}
The measure $\mu$ is known as the first-order spectral shift measure (SSM). The spectral shift measure in \eqref{TR1} corresponding to a pair of self-adjoint operators originated from the foundational work of M. G. Krein \cite{Krein53} following Lifshit's work \cite{Lf52}, where the authors shown that the SSM is absolutely continuous with respect to the Lebesgue measure. After that, unceasing progress has been made over the years on the existence of first-order spectral shift measures for single-variable operator functions associated with both self-adjoint operators and contractions. We refer to \cite{MN15,MNP19,Rybkin87,Rybkin89,Rybkin94} for more on the existence of first-order spectral shift measures for pairs of arbitrary contractions. The trace of the second-order Taylor remainders were first proposed by Koplienko \cite{Kp84} and also he conjectured the existence of higher-order absolutely continuous spectral shift measures with respect to the Lebesgue measure, which is resolved affirmatively in \cite{PoSkSu13}.  For more on the higher-order trace formulas corresponding to pairs of self-adjoint operators and contractions we refer to \cite{Peller06,DySk09, PoSkSu13,PoSkSu14, Sk10}, the survey article \cite{Sksurvey} and the references cited therein.

In contrast with single-variable operator functions, multivariate operator functions are much more complicated to understand. Articles  \cite{Agler12,Ha05,HaPed03} contribute to a better understanding of multivariate operator functions. Although the trace of Taylor-like approximations is well-studied for single-variable functions $f$, but they have not yet been explored for multivariate functions. In this direction, articles \cite{Sk15} and \cite{CGP23} extended the results on Taylor-like approximations from single-variable operator functions to the multivariate operator functions, where only first and second-order Taylor remainders were considered. For more on these discussions, we need to fix the following notations.

\begin{nota}\label{Notations}
Assume all operators in $\mathcal{M}$.
\begin{itemize}		
\item[(i)] Let $m, n\in\N$ with $n\geq 2$.
\item[(ii)] Denote $\Cmn$ as the set of $n$-tuples of pairwise commuting elements in $\mathcal{M}$ and by $\cn(\subseteq\Cmn),$ a set consisting of $n$-tuples of contractions.
\item[(iii)] Let $\A=(A_{1},\ldots,A_{n})$, $\B=(B_1,\ldots,B_n),~\V=(V_1,\ldots,V_n)$, and\\ $\bm{X}_{n}(s)$ $=(X_{1}(s),\ldots,X_{n}(s))$ for $s\in[0,1]$, where $V_{j}=B_{j}-A_{j},\text{ and }X_{j}(s)=A_{j}+sV_{j},~1\le j\le n$.  
\item[(iv)] Assume that $k_{1},\ldots,k_{n}\in\N\cup \{0\}$ and $\bm{X}_{n}=(X_{1},\ldots,X_{n})$ be a $n$-tuple of operators. Denote
\begin{align}
\nonumber&T_{k_{1},\ldots,k_{n}}(\bm{X}_{n})=X_{1}^{k_{1}}\cdots X_{n}^{k_{n}},\\
\nonumber&T_{k_{m},\ldots,k_{n}}({}_{m}\bm{X}_{n})=X_{m}^{k_{m}}\cdots X_{n}^{k_{n}},\,\,\,\,\,\,1< m\le n.
\end{align} 
\item[(v)] Let $\mathcal{N}_{n}$ denote the set of pairs ${\bm{A}_{n},\bm{B}_{n}}\in\mathcal{C}_{n}$ such that $(A_{1}+tV_{1},\ldots,A_{n}+tV_{n}),~t\in[0,1]$, consists of tuples of commuting contractions, which admits a commuting contractive normal dilation; that is, $\{\bm{A}_{n},\bm{B}_{n}\}\in\mathcal{N}_{n}$ if and only if $(A_{1}+tV_{1},\ldots,A_{n}+tV_{n})\in\mathcal{C}_{n}$ satisfies \eqref{Dilation} for every $t\in[0,1]$. More on dilation is recalled in the next section.
\end{itemize}	
\end{nota}
A necessary and sufficient condition for $(A_{1}+tV_{1},\ldots,A_{n}+tV_{n})\in\Cmn$, for every $t\in[0,1]$ is that $\bm{A}_{n}, \bm{B}_{n}, \bm{V}_{n}\in\Cmn$. An equivalent criterion for $\X(t)\in\cn$, for every $t\in[0,1]$ is established in \cite[Lemma 3.7]{Sk15}. Note that if $\bm{A}_{n}, \bm{B}_{n}\in\cn$, then $X_{j}(t)$, $1\le j\le n$, is a contraction for every $t\in[0,1]$.

Next we recall definitions and some important properties of the classical noncommutative $L_{p}$-spaces. The noncommutative $L_{p}$-space, $p\in[1,\infty)$, associated with $(\mathcal{M}, \tau)$ is defined by
\begin{align}
\nonumber&L_{p}(\mathcal{M}, \tau):=\left\{X\eta\mathcal{M}:~~\|X\|_{p}:=(\tau(|X|^{p}))^{1/p}<\infty\right\},
\end{align}
where we used the notation $X\eta\mathcal{M}$ for a closed densely defined operator $X$ affiliated with $\mathcal{M}$. Let $\Lpt:=L_{p}(\Mcal,\tr)\cap\Mcal.$ Noncommutative $\mathcal{L}_{p}$-spaces include the Schatten ideal $\bph=\mathcal{L}_{p}(\bh, \Tr).$ Often we use the phrase \enquote{$\tau$ is the standard trace} to mean $\mathcal{M}=\bh$ and $\tau$ is the canonical trace $\Tr$ acting on $\boh$. Throughout the paper, the tuples of operators $\A, \B$ consist of elements of $\mathcal{M}$ and the tuple $\V$ consists of elements from $\mathcal{L}_{p}(\mathcal{M},\tau).$
\vspace*{0.2cm}

\noindent\textbf{Function spaces.}  Let $C^{m}(\C^{n})$ denote the space of $m$-times continuously differentiable functions on $\C^{n}$. For $m=0$, we use the convention that $C^{0}(\C^{n})=C(\C^{n}),$ the space of continuous functions on $\C^{n}$. Given a function $f$ is holomorphic on
$$\D=\{(z_{1},\ldots,z_{n})\in\Cn : |z_{j}|<1+\epsilon, 1\le j\le n\},\,\,\,\,\,\epsilon>0,$$
has an absolutely convergent series in $\D$
\begin{equation}\label{srf}
f(z_{1},\ldots,z_{n})=\sum_{k_1,\ldots,k_n\ge 0}c_{k_1,\ldots,k_n}z_{1}^{k_{1}}\cdots z_{n}^{k_{n}},
\end{equation}
where $c_{k_1,\ldots,k_n}\in \mathbb{C}$. Then for a tuple of contractions $(A_{1},\ldots,A_{n})$, the \textit{multivariate operator function} is defined by
\begin{equation}\label{opf}
f(A_{1},\ldots,A_{n})=\sum_{k_1,\ldots,k_n\ge 0}c_{k_1,\ldots,k_n}A_{1}^{k_{1}}\cdots A_{n}^{k_{n}}.
\end{equation}
Denote by $\mathcal{A}(\D)$ the set of holomorphic functions on $\D$ and by $\mathcal{A}((-1-\epsilon, 1+\epsilon)^{n})$ the set of functions that can be represented by their Taylor series \eqref{srf} for $(z_{1},\ldots,z_{n})\in\Ra$. Often, we use the notation $\mathcal{A}(\C^{n})$ to mean the set of all holomorphic functions on $\Cn$. 

Assuming the above notations, the following results were obtained in \cite{Sk15}.
\begin{thm}{\normalfont\cite[Theorem 3.8, Theorem 4.8]{Sk15}}\label{Anna1}
Assume the above notations. 
\begin{itemize}
\item[(i)] Let $V_{j}\in\mathcal{L}_{1}(\Mcal,\tr),~1\le j\le n$. If $\{\A,\B\}\in\mathcal{N}_n$, then there exist finite measures $\mu_1,\ldots,\mu_n$ on $\Omega^n$ such that
\begin{align}
\label{Anna1eq1}&\tr\big(f(\B)-f(\A)\big)
=\sum_{j=1}^n\int_{\Omega^n}\frac{\partial f }{\partial z_j}(z_1,\ldots,z_n)\,d\mu_j(z_1,\ldots,z_n),
\end{align}
for every $f\in\mathcal{A}(\D)$, where $\Omega=\cir$.
\item[(ii)] Let $V_{j}\in\mathcal{L}_{2}(\Mcal,\tr)$ for $1\le j\le n$. If $\{\A,\B\}\in\mathcal{N}_{n}$, then there exist finite measures $\mu_{ij}$, $1\leq i\leq j\leq n$, on $\Omega^n$ such that
\begin{align}
\label{Anna2eq1}&\tr\left(f(\B)-f(\A)-\frac{d}{ds}\bigg|_{s=0}f(\X(s))\right)=\sum_{1\leq j\leq n}\int_{\Omega^n}\frac{\partial^2 f }{\partial z_j^2}(z_1,\ldots,z_n)\,d\mu_{jj}(z_1,\ldots,z_n)\\
\nonumber&\hspace{2.5in}+2\sum_{1\leq i< j\leq n}\int_{\Omega^n}\frac{\partial^2 f }{\partial z_i\partial z_j}(z_1,\ldots,z_n)\,d\mu_{ij}(z_1,\ldots,z_n),
\end{align}
for every $f\in\mathcal{A}(\D)$, where $\Omega=\cir$.
\end{itemize}

If, in addition, $\A$ and $\B$ are tuples of self-adjoint contractions, then there exist  real-valued measures $\mu_j, \mu_{ij}$ for $1\le i\le j\le n$, such that \eqref{Anna1eq1} and \eqref{Anna2eq1} hold respectively with $\Omega=[-1,1]$ for $f\in\mathcal{A}(\Ra)$, $\epsilon>0$.
\end{thm}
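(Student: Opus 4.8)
The plan is to reduce both formulas to an integral along the straight-line path $\X(t)=\A+t\V$, $t\in[0,1]$, and then convert the resulting operator-valued integrands into scalar Borel measures on $\cir^{n}$ by means of the commuting normal (unitary) dilation guaranteed by $\{\A,\B\}\in\mathcal{N}_{n}$ together with the spectral theorem. The first ingredient is an explicit differentiation formula: since $\A,\B,\V\in\Cmn$, the operators $A_{i},V_{j}$ commute pairwise, so along the path each monomial differentiates exactly as in the scalar case. Summing the absolutely convergent series \eqref{opf} (term-by-term differentiation is legitimate because $f$ is holomorphic on a polydisc strictly larger than $\overline{\mathbb{D}^{n}}$ while $\|X_{j}(t)\|\le 1$) yields
\begin{align*}
\frac{d}{dt}\,f(\X(t)) &= \sum_{j=1}^{n}\frac{\partial f}{\partial z_{j}}(\X(t))\,V_{j}, \\
\frac{d^{2}}{dt^{2}}\,f(\X(t)) &= \sum_{i,j=1}^{n}\frac{\partial^{2} f}{\partial z_{i}\partial z_{j}}(\X(t))\,V_{i}V_{j}.
\end{align*}
Applying the fundamental theorem of calculus (respectively Taylor's theorem with integral remainder) under the trace then gives
\begin{align*}
\tr\big(f(\B)-f(\A)\big) &= \int_{0}^{1}\sum_{j=1}^{n}\tr\!\Big(\tfrac{\partial f}{\partial z_{j}}(\X(t))\,V_{j}\Big)\,dt, \\
\tr\Big(f(\B)-f(\A)-\tfrac{d}{ds}\big|_{s=0}f(\X(s))\Big) &= \int_{0}^{1}(1-t)\sum_{i,j=1}^{n}\tr\!\Big(\tfrac{\partial^{2} f}{\partial z_{i}\partial z_{j}}(\X(t))\,V_{i}V_{j}\Big)\,dt.
\end{align*}

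Next, for each fixed $t$ I would invoke the dilation. Because $\{\A,\B\}\in\mathcal{N}_{n}$, the commuting contraction tuple $\X(t)$ has a commuting normal---indeed unitary---dilation $\bm{N}_{n}(t)$ on a larger space $\mathcal{K}\supseteq\hil$, with joint spectral measure $E_{t}$ supported on $\cir^{n}$, satisfying $g(\X(t))=P\,g(\bm{N}_{n}(t))\big|_{\hil}$ for every analytic polynomial $g$, where $P$ is the orthogonal projection onto $\hil$. Setting $F_{t}(\,\cdot\,):=P\,E_{t}(\,\cdot\,)\big|_{\hil}$ produces a $\mathcal{B}(\hil)$-valued positive operator measure with $F_{t}(\cir^{n})=I_{\hil}$ and $\int_{\cir^{n}}g\,dF_{t}=g(\X(t))$ for analytic polynomials $g$. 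I would then define the candidate measures by
\begin{align*}
\mu_{j}(\,\cdot\,):=\int_{0}^{1}\tr\!\big(F_{t}(\,\cdot\,)\,V_{j}\big)\,dt, \qquad \mu_{ij}(\,\cdot\,):=\int_{0}^{1}(1-t)\,\tr\!\big(F_{t}(\,\cdot\,)\,V_{i}V_{j}\big)\,dt \quad (i\le j),
\end{align*}
the off-diagonal terms $i\ne j$ being merged through the symmetry $\partial_{z_i}\partial_{z_j}f=\partial_{z_j}\partial_{z_i}f$, which is exactly what produces the factor $2$ in \eqref{Anna2eq1}.

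The crucial point is finiteness. For any finite Borel partition $\{B_{k}\}$ and unimodular phases $e^{i\theta_{k}}$, the operator $\sum_{k}e^{i\theta_{k}}E_{t}(B_{k})$ is unitary, whence $\big\|\sum_{k}e^{i\theta_{k}}F_{t}(B_{k})\big\|\le 1$; choosing $\theta_{k}$ to align the phases of $\tr(F_{t}(B_{k})W)$ gives $\sum_{k}|\tr(F_{t}(B_{k})W)|\le\|W\|_{1}$, so the total variation of $\tr(F_{t}(\,\cdot\,)W)$ is at most $\|W\|_{1}$, with $W=V_{j}\in\mL$ for part (i) and $W=V_{i}V_{j}$ for part (ii); in the latter case Hölder's inequality gives $\|V_{i}V_{j}\|_{1}\le\|V_{i}\|_{2}\|V_{j}\|_{2}<\infty$ precisely because $V_{i},V_{j}\in\mLL$. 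Integrating in $t$ keeps $\mu_{j}$ and $\mu_{ij}$ finite. Since $\partial f/\partial z_{j}$ and $\partial^{2}f/\partial z_{i}\partial z_{j}$ are holomorphic on a polydisc strictly containing $\overline{\mathbb{D}^{n}}$, their Taylor polynomials converge uniformly on $\cir^{n}$, so the compression identity $\int g\,dF_{t}=g(\X(t))$ passes from analytic polynomials to these derivatives, and integrating against $\mu_{j}$, $\mu_{ij}$ reproduces the two integral representations above, yielding \eqref{Anna1eq1} and \eqref{Anna2eq1}.

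For the self-adjoint refinement, one takes a self-adjoint (normal with real spectrum) dilation, so $E_{t}$ and $F_{t}$ are supported on $[-1,1]^{n}$ and, with $V_{j}$ self-adjoint, $\tr(F_{t}(\,\cdot\,)V_{j})$ and $\tr(F_{t}(\,\cdot\,)V_{i}V_{j})$ are real-valued; holomorphy on $\Ra$ then suffices. I expect the main obstacle to be this finiteness step: because analytic polynomials are not sup-norm dense in $C(\cir^{n})$, one cannot simply apply the Riesz--Markov theorem to the polynomial functional, so the measures must instead be built directly from the dilation's spectral measure, with the total-variation control coming from the unitary phase-alignment argument above; a secondary technical point is the rigorous justification of term-by-term differentiation of the operator series defining $f(\X(t))$.
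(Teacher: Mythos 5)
There is a genuine gap, and it sits at the very first step. You assert that since $\A,\B,\V\in\Cmn$, ``the operators $A_i,V_j$ commute pairwise,'' and you build both differentiation formulas on that claim. But the hypothesis $\{\A,\B\}\in\mathcal{N}_n$ (equivalently, $\A,\B,\V\in\Cmn$ plus the dilation property along the path) only gives commutation \emph{within} each tuple, together with the symmetric cross relation $A_iV_j+V_iA_j=A_jV_i+V_jA_i$; it does \emph{not} give $[A_i,V_j]=0$, and in particular $V_j$ need not commute with $X_j(t)$. (Concrete example: $n=2$, $A_1=A_2=A$, $V_1=V_2=V$ with $[A,V]\neq 0$ and $A$, $A+V$ contractions; all hypotheses of the theorem hold, since $(A+tV,A+tV)$ trivially commutes and a Sz.-Nagy dilation of the single contraction $A+tV$ dilates the pair.) Consequently $\frac{d}{dt}\,X_j(t)^{k}=\sum_{p_0+p_1=k-1}X_j(t)^{p_0}V_jX_j(t)^{p_1}$ cannot be collapsed to $k\,X_j(t)^{k-1}V_j$, and neither of your displayed derivative formulas is a valid operator identity — this non-commutativity is precisely the difficulty the paper emphasizes around \eqref{TReq}. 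For part (i) the damage is repairable: after taking the trace, cyclicity together with the commutativity of the $X_i(t)$ \emph{among themselves} collapses each term, so $\tr\big(\frac{d}{dt}f(\X(t))\big)=\sum_j\tr\big(\frac{\partial f}{\partial z_j}(\X(t))V_j\big)$ does hold, and your direct construction of $\mu_j$ from the compressed spectral measure of the dilation, with the phase-alignment total-variation bound $\|V_j\|_1$, is then a legitimate (and genuinely different) route to \eqref{Anna1eq1}. For part (ii) the failure is fatal: the trace of the second derivative consists of terms $\tr\big(\cdots X_{j_1}^{p_0}V_{j_1}X_{j_1}^{p_1}V_{j_1}X_{j_1}^{p_2}\cdots\big)$ and mixed analogues in which the two perturbations remain separated by powers of the $X$'s; cyclicity can only rotate the whole word, not merge the $V$'s, and already in one variable $\tr\big(X^{p_0+p_2}VX^{p_1}V\big)\neq\tr\big(X^{k-2}V^2\big)$ in general — this is exactly why Koplienko's formula \cite{Kp84} requires double operator integrals. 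So your identity $\tr\big(\frac{d^2}{dt^2}f(\X(t))\big)=\sum_{i,j}\tr\big(\partial_i\partial_jf(\X(t))V_iV_j\big)$ is false, and with it the construction of $\mu_{ij}$, the bound via $\|V_iV_j\|_1\le\|V_i\|_2\|V_j\|_2$ (which is applied to the wrong integrand), and the factor-$2$ bookkeeping.

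The actual proof (in \cite{Sk15}, whose machinery this paper reproduces and generalizes in Lemmas \ref{Lem2}, \ref{Lem5}, \ref{Lem6} and Theorems \ref{Thm1}, \ref{Thm2}) keeps the interleaved form of the derivative, passes to the joint spectral measure of the dilated commuting normal tuple so that the trace becomes a sum over products of spectral projections and perturbations, identifies the scalar weights as \emph{divided differences} of $f$ (bounded via \eqref{Lem5r2} by $\frac{1}{i_1!\cdots i_k!}\|\partial^m f\|_\infty$), and controls the resulting sum by the Birman--Solomyak/Dykema--Skripka Hilbert--Schmidt estimate of Lemma \ref{Lem3}, $\sum\big|\tr\big(\prod_j E(\delta_{i_j})V_j\big)\big|\le\prod_j\|V_j\|_2$; the measures are then produced by Hahn--Banach extension followed by Riesz--Markov. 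Incidentally, your closing worry is the one point that is not an obstacle: one does not need polynomials to be dense in $C(\cir^n)$, because the bounded functional defined on the span of the derivatives $\partial^m f$ is extended by Hahn--Banach to all of $C(\cir^n)$ before Riesz--Markov is invoked.
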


It is worth mentioning that, in \cite{CGP23} authors obtained the trace formulas \eqref{Anna1eq1} and \eqref{Anna2eq1} corresponding to non-analytic multivariable scalar function class for various pairs of tuples of operators.

We would also like to highlight that in the single-variable setting for pairs of bounded operators $(T,V)$ with perturbation $V\in\hils$, authors of \cite{DySk09} and \cite{PoSkSu14} gave the existence of higher-order spectral shift measures $\mu_{n,T,V}$ such that
\begin{align}
\Tr\left(\mathscr{R}_{n,T,f}(V)\right)=\int_{\Omega}f^{(n)}(z)d\mu_{n,T,V}(z)
\end{align}
for sufficiently nice function $f$, where $\Omega=\R$ or $\cir$.

Motivated from \cite{DySk09,PoSkSu14} and by maintaining the harmony of Hilbert-Schmidt perturbations for single variable setting, in this article, we are interested to study the behavior of the higher-order Taylor remainders associated with multivariate operator functions under the same hypothesis as in \cite{Sk15}. More precisely, the following question is of particular interest.
\begin{prb}
Does there exist higher-order spectral shift measure for a pair of tuples of commuting contractions $(\A,\B)$ in $\Mcal$ such that $B_i-A_i\in\mLL(\Mcal,\tr),$ corresponding to the multivariate analytic function class $\mathcal{A}(\D)$?
\end{prb}
Below we briefly summarize our main results.
\vspace*{0.2cm}

\noindent\textbf{New results:}  
Note that in the single variable case, to obtain the higher-order spectral shift measures, one needs to estimate the trace of the higher-order derivative of an operator function in terms of the sup-norm of the same order derivative of the associated scalar function. Therefore, we should focus primarily on obtaining the explicit expression of the higher-order derivative of the function $t\mapsto f(\X(t))$ along with the estimation of its trace. Note that the operator derivatives $\dmds\bigg|_{s=t}f(\X(s))$ considered in \cite[Lemma 3.2, Lemma 4.4]{Sk15} are evaluated in $\|\cdot\|_{1}$ for $m=1$ and $ 2$ by assuming $V_{j},~1\le j\le n$, in $\mL(\Mcal, \tr)$ and $\mLL(\Mcal, \tr)$ respectively. Furthermore, in \cite{KsPoSlSu12}, it is evaluated in the Schatten p-norm $\|\cdot\|_{p}$ only for $m=1$ by considering $V_{j}\in\bph,\,\,1<p<\infty,$ along paths of tuples of bounded commuting self-adjoints $\X(s)$ with tangent vectors in the closure of the narrow tangent space
$$\Gamma_p^0(\A)=\left\{(\ii[A_1,Y]+Z_1,\ldots,\ii[A_n,Y]+Z_n):\, Y\in \bph,\, Z=(Z_{1},\ldots,Z_{n})\in(\{\A\}''\cap\bph)^{(n)}\right\},$$
where $\{\A\}''$ is a bicommutant of the family $\{A_1,\ldots,A_n\}$. For our purpose, these results are not sufficient to complete our aim. By considering linear path, and analytic functions $f\in\mathcal{A}(\D)$, we provide an explicit expression of  $\dmds\bigg|_{s=t}f(\X(s))$, $m\in \N$, for more general $\V$ and contractions $\X$ (see Lemma \ref{Lem2}). It is important to note that, in single variable case, 
\begin{equation}\label{TReq}
\Tr\left[\dmds\bigg|_{s=t} f(A+sV)\right]=\Tr\left[\dmmds\bigg|_{s=t} f'(A+sV)\cdot V\right],
\end{equation}
where $A\in\bh$, $V\in\mathcal{B}_{m}(\hil)$, and $f$ is a polynomial (see \cite[Lemma 2.2]{PoSkSu14}). However, in the case of multivariate functions, the above analogy \eqref{TReq} does not hold (see \cite[Equations 5.7 and 5.23]{CGP23}). In other words, the derivative of a multivariate operator function is more complex than the derivative of a single variable operator function when the initial operator and the perturbed operator do not commute and its complexity increases with the order of the differentiation.

Consider $m\ge 2,~1\le k\le m,~1\le j_{1}<\cdots<j_{k}\le n$ with $1\le i_{1},\ldots,i_{k}\le m$ such that $\sum_{l=1}^{k}i_{l}=m$ and $V_{j}\in\mLL(\Mcal, \tr)\,$ for $1\le j\le n$. Assume that either $m=2$ or $\tr$ is the standard trace. Suppose that there exists $t\in[0, 1]$ such that $\X(t)\in\cn$ satisfies \eqref{Dilation}, then we prove the following estimate
\begin{equation}\label{mainestimate}
\left|\tr\left(D_{f}^{j_{1}^{i_{1}},\ldots,j_{k}^{i_{k}}}(t)\right)\right|\le\left(\prod_{l=1}^{k}\|V_{j_{l}}\|_{2}^{i_{l}}\right)\left\|\frac{\partial^{m}f}{\partial z_{j_{1}}^{i_{1}}\cdots\partial z_{j_{k}}^{i_{k}}}\right\|_{L^{\infty}(\Omega^{n})},
\end{equation}
for every $f\in\mathcal{A}(\D)$,~$\epsilon>0$. In addition, if $A_{j}=A_{j}^{*},~ B_{j}=B_{j}^{*},~1\le j\le n$, and suppose there exists $t\in[0,1]$ such that $\X(t)\in\cn$, then the same holds for $f\in\mathcal{A}(\Ra)$,~$\epsilon>0$ (see Theorem \ref{Thm1}). {\sf Our proof involves tricky combinatorial approaches due to the increasing order of the differentiation of the function $t\mapsto f(\X(t))$, and surprisingly, after intricate calculations, we are able to get our above-desired estimates.}

Let $m\ge 2,~1\le k\le m$ and $V_{j}\in\mLL(\Mcal,\tr),$ for $1\le j\le n$. Assume that either $\tr$ is the standard trace or $m=2$. If $\{\A,\B\}\in\mathcal{N}_{n}$, then using our above estimate \eqref{mainestimate} we prove the existence of finite measures $\mu_{j_{1}^{i_{1}}\cdots j_{k}^{i_{k}}}$ for $1\le j_1<\cdots< j_k \le n$ and $1\le i_{1},\ldots,i_{k}\le m$ with $\sum_{l=1}^{k}i_{l}=m$ such that
\begin{align}
\label{mainformula}&\tr\left[f(\bm{B}_n)-\sum_{k=0}^{m-1}\frac{1}{k!}\,\frac{d^k}{ds^k}\bigg|_{s=0}f(\bm X_n(s))\right]=\\
\nonumber&\sum_{k=1}^{m}\left[\sum_{1\le j_{1}<\cdots<j_{k}\le n}^{}\left(\sum_{\substack{i_{1},\ldots,i_{k}\ge 1; \\ i_1+\cdots+i_{k}=m}}\frac{m!}{i_{1}!\cdots i_{k}!}\left(\int_{\Omega^{n}}\frac{\partial^{m}f}{\partial z_{j_{1}}^{i_{1}}\cdots\partial z_{j_{k}}^{i_{k}}}(z_1,\ldots,z_n)\,\,d\mu_{j_{1}^{i_{1}}\ldots j_{k}^{i_{k}}}(z_1,\ldots,z_n)\right)\right)\right],
\end{align}
for every $f\in\mathcal{A}(\D)$ (see Theorem \ref{Thm2}). In addition, if $\A$ and $\B$ are consist of self-adjoint operators, then there exist finite real-valued measures $\mu_{j_{1}^{i_{1}}\cdots j_{k}^{i_{k}}}$ such that the former holds with the integrals are evaluated over $[-1, 1]^{n}$. {\sf Therefore in the context of the existence of higher-order spectral shift measures, the above trace formula \eqref{mainformula} generalizes the trace formulas obtained in \cite[Theorem 5.1]{DySk09} and \cite[Theorem 1.3]{PoSkSu14} from a single-variable to a multivariable operator function case under Hilbert-Schmidt perturbations.}
\vspace*{0.2cm}

\noindent\textbf{Outline:} Apart from the Introduction, the article consists of three sections. In Section \ref{Sec2}, we recall some preliminaries on von Neumann inequality and multivariate normal dilations. In Section \ref{Sec3}, we discuss some facts related to divided differences and establish the explicit expression of higher-order derivatives of the function $t\mapsto f(\X(t)).$ Finally, in Section \ref{Sec4}, we prove the existence of higher-order spectral shift measures by estimating the trace of higher-order derivatives.

\section{Preliminaries}\label{Sec2}
In this section, we recall definitions and some preliminaries on von Neumann inequality and multivariate commuting normal dilation. 

Recall that, for  a tuple of commuting bounded  normal operators $\X$ and given a bounded Borel function $f$ on $\sigma(\X)$ (the joint spectrum of $\X=(X_{1},\ldots,X_{n})$), the operator function $f(\X)$ is representable by the integral 
\begin{align}
\label{Spectral}&f(\X)\overset{\text{def}}=\underbrace{\idotsint}\limits_{\sigma(\X)} f(\lambda_{1},\ldots,\lambda_{n})\,dE(\lambda_{1},\ldots,\lambda_{n}),
\end{align}
where $E$ is the product of the spectral measures $E_{j}$ of the operators $X_{j},\,1\le j\le n$ {\normalfont\cite[Theorem 6.5.1]{BSbook}, and it is supported on $\sigma(\X)$. It follows from \eqref{Spectral} that  $\X$ satisfies the von Neumann inequality,
\begin{equation}\label{vonNeumann}
\|f(\X)\|\le\|f\|_{L^{\infty}(\cir^{n})},\end{equation}
for $f\in\mathcal{A}(\D),\,\,\epsilon>0.$ It is well-known that the von Neumann inequality holds for all single contractions \cite{Neumann51} and for all pairs of commuting contractions \cite{Ando63}. A sufficient condition for an $n$-tuple of commuting contractions satisfying \eqref{vonNeumann} is established in \cite{GVVW09}.
	
The results of our study will be demonstrated for tuples of commuting contractions that can be dilated to tuples of commuting normal contractions. That is, we will consider those $\bm{X}_{n}\in\mathcal{C}_{n}$ for which there exists a Hilbert space $\mK\supset\hil$ and a tuple of commuting normal contractions $(U_{1},\ldots,U_{n})$ on $\mK$ such that
\begin{equation}\label{Dilation}
X_{j_{1}}\ldots X_{j_{l}}=P_{\hil}U_{j_{1}}\ldots U_{j_{l}}|_{\hil}^{},\quad l\in\N,\quad1\le j_{1},\ldots,j_{l}\le n,
\end{equation}
where $P_{\hil}$ is the orthogonal projection from $\mK$ onto $\hil$. It should be noted that, if $\X\in\cn$ satisfies \eqref{Dilation}, then it also  satisfies the von Neumann inequality \eqref{vonNeumann}. The unitary dilation of a single contraction \cite{Nagy53} as well as the commuting unitary dilation for pair of commuting contractions \cite{Ando63} are well known. The \cite[Corollary 4.9]{Pisier} gives a necessary and sufficient condition for the existence of multivariate commuting unitary dilation \eqref{Dilation}.

\section{Divided difference and technical results}\label{Sec3}
In this section, we provide some facts on divided differences to be used in a later section. In the evaluation of directional derivatives of single-variable operator functions, divided difference plays a major role. For multivariate operator functions, we require a modified representation of the divided difference.

Let $f\in C^{k}(\C)$ and $\lambda_{0},\ldots,\lambda_{k}\in\C$. We recall that the divided difference of zeroth order is the function itself, i.e., $f[\lambda_{0}]=f(\lambda_{0}).$ Then the divided difference of order $k$ is defined recursively by 
\begin{align}
\label{Divdiff}f[\lambda_{0},\ldots,\lambda_{k-2},\lambda_{k-1},\lambda_{k}]=\lim_{\lambda\to\lambda_{k}}\frac{f[\lambda_{0},\ldots,\lambda_{k-2},\lambda]-f[\lambda_{0},\ldots,\lambda_{k-2},\lambda_{k-1}]}{\lambda-\lambda_{k-1}}.
\end{align}
Let $\phi\in\mathcal{A}(\Cn).$ We define the $i_{1}$-th order divided difference of the function $\phi$ at $j_{1}$-th coordinate by
\begin{equation}\label{Divdiff2}
D_{j_{1}^{i_{1}}}^{\lambda_{1,0},\ldots,\lambda_{1,i_{1}}}\phi(z_{1},\ldots,z_{n})=\phi(z_1,\ldots,z_{j_{1}-1},\left[\lambda_{1,0},\ldots,\lambda_{1,i_{1}}\right],z_{j_{1}+1},\ldots,z_{n}),
\end{equation}
where $\lambda_{1,0},\ldots,\lambda_{1,i_{1}}\in\C.$ Similarly, for $\phi\in\mathcal{A}(\Cn)$ and $\lambda_{1,0},\ldots,\lambda_{1,i_{1}},\lambda_{2,0},\ldots,\lambda_{2,i_{2}}\in\C$, we define the $i_{1}$-th order divided difference at $j_{1}$-th coordinate and the $i_{2}$-th order divided difference at $j_{2}$-th coordinate of the function $\phi$ by
\begin{align}
\nonumber&D_{j_{1}^{i_{1}},j_{2}^{i_{2}}}^{\lambda_{1,0},\ldots,\lambda_{1,i_{1}},\lambda_{2,0},\ldots,\lambda_{2,i_{2}}}\phi(z_{1},\ldots,z_{n})=D_{j_{1}^{i_{1}}}^{\lambda_{1,0},\ldots,\lambda_{1,i_{1}}}D_{j_{2}^{i_{2}}}^{\lambda_{2,0},\ldots,\lambda_{2,i_{2}}}\phi(z_{1},\ldots,z_{n})\\
\label{Divdiff3}&\hspace*{0.7in}=\phi(z_1,\ldots,z_{j_{1}-1},\left[\lambda_{1,0},\ldots,\lambda_{1,i_{1}}\right],z_{j_{1}+1},\ldots,z_{j_{2}-1},\left[\lambda_{2,0},\ldots,\lambda_{2,i_{2}}\right],z_{j_{2}+1},\ldots,z_{n}).
\end{align}
Note that,
\begin{align}
\label{Divdiff4}&D_{j_{1}^{i_{1}}}^{\lambda_{1,0},\ldots,\lambda_{1,i_{1}}}D_{j_{2}^{i_{2}}}^{\lambda_{2,0},\ldots,\lambda_{2,i_{2}}}\phi(z_{1},\ldots,z_{n})=D_{j_{2}^{i_{2}}}^{\lambda_{2,0},\ldots,\lambda_{2,i_{2}}}D_{j_{1}^{i_{1}}}^{\lambda_{1,0},\ldots,\lambda_{1,i_{1}}}\phi(z_{1},\ldots,z_{n}).
\end{align}
The next two lemmas are crucial to obtain our main estimate and they generalize  \cite[Lemmas 4.1, 4.2]{Sk15}.
\begin{lma}\label{Lem5}
Let $\lambda_{l,0},\ldots,\lambda_{l,i_{l}}\in\C$ and $t_{l,1},\ldots,t_{l,i_{l}}\in\R$, for $1\le l\le k$ and $1\le k\le m.$ If $\phi\in\mathcal{A}(\Cn)$ and  $z_{1},\ldots,z_{n}\in\C$, then
\begin{align}
\nonumber&\left(\prod_{l=1}^{k}D_{j_{l}^{i_{l}}}^{\lambda_{l,0},\ldots,\lambda_{l,i_{l}}}\right)\phi(z_{1},\ldots,z_{n})=\prod_{r=1}^{k}\left[\int_{0}^{1}dt_{r,1}\int_{0}^{t_{r,1}}dt_{r,2}\cdots\int_{0}^{t_{r,i_{r}-1}}dt_{r,i_{r}}\right]\\
\nonumber&\frac{\partial^{m}}{\partial z_{j_{1}}^{i_{1}}\cdots\partial z_{j_{k}}^{i_{k}}}\,\phi\Bigg(z_{1},\ldots,z_{j_{1}-1},\left(\sum_{l=1}^{i_{1}}(\lambda_{1,l-1}-\lambda_{1,l})t_{1,i_{1}+1-l}+\lambda_{1,i_{1}}\right),z_{j_{1}+1},\ldots,z_{j_{2}-1},\\
\label{Lem5r1}&\left(\sum_{l=1}^{i_{2}}(\lambda_{2,l-1}-\lambda_{2,l})t_{2,i_{2}+1-l}+\lambda_{2,i_{2}}\right),z_{j_{2}+1},\ldots,z_{j_{k}-1},\left(\sum_{l=1}^{i_{k}}(\lambda_{k,l-1}-\lambda_{k,l})t_{k,i_{k}+1-l}+\lambda_{k,i_{k}}\right),\ldots,z_{n}\Bigg),
\end{align}
and 
\begin{align}
\label{Lem5r2}&\sup_{\substack{z_{1},\ldots,z_{n},\lambda_{l,0},\ldots,\lambda_{l,i_{l}}\in\C;\\1\le l\le k}}~\left|\left(\prod_{l=1}^{k}D_{j_{l}^{i_{l}}}^{\lambda_{l,0},\ldots,\lambda_{l,i_{l}}}\right)\phi(z_{1},\ldots,z_{n})\right|\le\frac{1}{i_{1}!\cdots i_{k}!}\left\|\frac{\partial^{m}\phi}{\partial z_{j_{1}}^{i_{1}}\cdots\partial z_{j_{k}}^{i_{k}}}\right\|_{\infty},
\end{align}
where $1\le i_{1},\ldots,i_{k}\le m$ and $\sum_{l=1}^{k}i_{l}=m.$
\end{lma}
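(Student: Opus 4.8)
The plan is to reduce the multivariate identity \eqref{Lem5r1} to the classical one-variable Hermite--Genocchi formula applied successively in each of the coordinates $j_{1},\ldots,j_{k}$. Since the operators $D_{j_{l}^{i_{l}}}^{\lambda_{l,0},\ldots,\lambda_{l,i_{l}}}$ act on pairwise distinct slots and commute by \eqref{Divdiff4}, and since the partial derivatives $\partial/\partial z_{j_{l}}$ commute as well, it suffices to establish the single-coordinate case and then iterate, holding the remaining variables fixed as parameters throughout.

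First I would prove, by induction on the order $i$, the one-variable representation in which the divided difference sits in the $j$-th slot:
\[
\phi\big(\ldots,[\lambda_{0},\ldots,\lambda_{i}],\ldots\big)=\int_{0}^{1}dt_{1}\cdots\int_{0}^{t_{i-1}}dt_{i}\,\frac{\partial^{i}\phi}{\partial z_{j}^{i}}\Big(\ldots,\lambda_{i}+\sum_{l=1}^{i}(\lambda_{l-1}-\lambda_{l})\,t_{i+1-l},\ldots\Big).
\]
The base case $i=1$ follows from the defining difference quotient together with the substitution $u=\lambda_{1}+(\lambda_{0}-\lambda_{1})t_{1}$, which turns $\int_{0}^{1}\tfrac{\partial\phi}{\partial z_{j}}\,dt_{1}$ into the first divided difference. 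For the inductive step I would insert the order $(i-1)$ formula into the recursion \eqref{Divdiff}, evaluate the limit as a derivative with respect to the top node, and absorb the extra differentiation and integration by the fundamental theorem of calculus; the decreasing chain $1\ge t_{1}\ge\cdots\ge t_{i}\ge 0$ arises precisely because each recursion step introduces one further nested integral with the previous variable as its upper limit.

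Iterating this identity over $l=1,\ldots,k$ produces the product of iterated integrals in \eqref{Lem5r1} and, because the mixed partials commute, the single mixed derivative $\partial^{m}\phi/\partial z_{j_{1}}^{i_{1}}\cdots\partial z_{j_{k}}^{i_{k}}$ with $\sum_{l}i_{l}=m$. The key elementary observation for the estimate \eqref{Lem5r2} is that, on the domain $1\ge t_{r,1}\ge\cdots\ge t_{r,i_{r}}\ge 0$, the argument $\lambda_{r,i_{r}}+\sum_{l=1}^{i_{r}}(\lambda_{r,l-1}-\lambda_{r,l})\,t_{r,i_{r}+1-l}$ is a convex combination of the nodes $\lambda_{r,0},\ldots,\lambda_{r,i_{r}}$: collecting coefficients shows each interior node $\lambda_{r,p}$ carries the nonnegative weight $t_{r,i_{r}-p}-t_{r,i_{r}-p+1}$, the end nodes carry $t_{r,i_{r}}$ and $1-t_{r,1}$, and these weights telescope to $1$. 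Consequently the argument ranges inside $\C$ (or inside $[-1,1]$ in the self-adjoint application), so after taking absolute values the integrand is bounded uniformly by $\big\|\partial^{m}\phi/\partial z_{j_{1}}^{i_{1}}\cdots\partial z_{j_{k}}^{i_{k}}\big\|_{\infty}$, independently of the $\lambda$'s and $z$'s.

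Finally, pulling this sup-norm out of the iterated integral leaves the volume of each ordered simplex $\{1\ge t_{r,1}\ge\cdots\ge t_{r,i_{r}}\ge 0\}$, which equals $1/i_{r}!$; multiplying over $r=1,\ldots,k$ yields the factor $1/(i_{1}!\cdots i_{k}!)$ in \eqref{Lem5r2}. I expect the main obstacle to be the inductive step for the one-variable formula: correctly handling the limit in \eqref{Divdiff} at possibly coincident nodes and matching the change of variables so that both the nested-integral domain and the convex-combination argument of the derivative come out exactly as written. The subsequent multivariate bookkeeping and the volume computation are then routine.
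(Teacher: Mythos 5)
Your proposal is correct and takes essentially the same route as the paper: the paper also proves \eqref{Lem5r1} by reducing to the single-coordinate integral (Hermite--Genocchi) representation --- which it imports as \cite[Lemma 4.1]{Sk15} rather than re-deriving by induction on the order --- and then iterates over the coordinates $j_{1},\ldots,j_{k}$ via the commutativity \eqref{Divdiff4} (handling coincident nodes by analyticity, as you anticipate), with \eqref{Lem5r2} obtained exactly as you describe by bounding the integrand and using the volume $1/i_{r}!$ of each ordered simplex. Your convex-combination observation is a harmless extra here, since the sup in \eqref{Lem5r2} runs over all of $\C^{n}$, though it is indeed the relevant point in the later application where the norm is restricted to $\Omega^{n}$.
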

\begin{proof}
Note that \eqref{Lem5r2} is an immediate consequence of the representation \eqref{Lem5r1}.\\
The proof of \eqref{Lem5r1} goes by induction on $k$. If $k=1$, then by applying \cite[Lemma 4.1]{Sk15} to the function $\phi\in\mathcal{A}(\Cn)$ we obtain
\begin{align}
\nonumber&D_{j_{q}^{i_{q}}}^{\lambda_{q,0},\ldots,\lambda_{q,i_{q}}}\phi(z_{1},\ldots,z_{n})=\int_{0}^{1}dt_{q,1}\int_{0}^{t_{q,1}}dt_{q,2}\cdots\int_{0}^{t_{q,i_{q}-1}}dt_{q,i_{q}}\\
\label{Lem5r3}&\hspace*{0.7in}\frac{\partial^{i_{q}}}{\partial z_{j_{q}}^{i_{q}}}\,\phi\left(z_{1},\ldots,z_{j_{q}-1},\left(\sum_{l=1}^{i_{q}}(\lambda_{q,l-1}-\lambda_{q,l})t_{q,i_{q}+1-l}+\lambda_{q,i_{q}}\right),z_{j_{q}+1},\ldots,z_{n}\right).
\end{align}
Let the formula be true for $k=q-1~(2\le q\le m)$. From \eqref{Divdiff4} we derive
\begin{align}
\nonumber&\prod_{l=1}^{q}\left(D_{j_{l}^{i_{l}}}^{\lambda_{l,0},\ldots,\lambda_{l,i_{l}}}\right)\phi(z_{1},\ldots,z_{n})\\
\label{Lem5r5}&=D_{j_{q}^{i_{q}}}^{\lambda_{q,0},\ldots,\lambda_{q,i_{q}}}\left[\prod_{l=1}^{q-1}\left(D_{j_{l}^{i_{l}}}^{\lambda_{l,0},\ldots,\lambda_{l,i_{l}}}\right)\phi(z_{1},\ldots,z_{n})\right].
\end{align}
Now the result trivially holds if $\lambda_{q,0},\ldots,\lambda_{q,i_{q}}$ are all distinct in $\C$. If not, then due to the analyticity of $\phi\in\mathcal{A}(\Cn)$ \eqref{Lem5r5} reduces to
\begin{align}
\nonumber&\prod_{r=1}^{q-1}\left[\int_{0}^{1}dt_{r,1}\int_{0}^{t_{r,1}}dt_{r,2}\cdots\int_{0}^{t_{r,i_{r}-1}}dt_{r,i_{r}}\right]\\
\nonumber&D_{j_{q}^{i_{q}}}^{\lambda_{q,0},\ldots,\lambda_{q,i_{q}}}\frac{\partial^{i_{1}+\cdots+i_{q-1}}}{\partial z_{j_{1}}^{i_{1}}\cdots\partial z_{j_{q-1}}^{i_{q-1}}}\,\phi\Bigg(z_{1},\ldots,z_{j_{1}-1},\left(\sum_{l=1}^{i_{1}}(\lambda_{1,l-1}-\lambda_{1,l})t_{1,i_{1}+1-l}+\lambda_{1,i_{1}}\right),z_{j_{1}+1},\ldots,\\
\nonumber&\hspace{1.2in}z_{j_{2}-1},\left(\sum_{l=1}^{i_{2}}(\lambda_{2,l-1}-\lambda_{2,l})t_{2,i_{2}+1-l}+\lambda_{2,i_{2}}\right),z_{j_{2}+1},\ldots,\\
\label{Lem5r6}&\hspace{1.2in}z_{j_{q-1}-1},\left(\sum_{l=1}^{i_{q-1}}(\lambda_{q-1,l-1}-\lambda_{q-1,l})t_{q-1,i_{q-1}+1-l}+\lambda_{q-1,i_{q-1}}\right),z_{j_{q-1}+1},\ldots,z_{n}\Bigg),
\end{align}
which concludes \eqref{Lem5r1}.
\end{proof}
\begin{lma}\label{Lem6}
Let $N\in\N$, $\lambda_{i}\in\C$, for $1\le i\le m$ and
\begin{align}
\label{Lem6r1}&f_{N}(z_1,\ldots,z_n)=\sum_{0\le k_1,\ldots,k_n\le N}^{}c_{k_1,\ldots,k_n}z_{1}^{k_{1}}\cdots z_{n}^{k_{n}}.
\end{align}
Then the following assertions hold. 
\begin{itemize}
\item[(i)]\label{Lem6i} If $k_{j}\ge m$ and $h\neq 0$, then
\begin{align}
\nonumber&f_{N}(z_{1},\ldots,z_{j-1},[\lambda_{1},\ldots,\lambda_{m-1},\lambda_{m}+h,\lambda_{m}+h],z_{j+1},\ldots,z_{n})\\
\label{Lem6r2}&\quad=\sum_{0\le k_1,\ldots,k_n\le N}^{}c_{k_1,\ldots,k_n}\sum_{\substack {p_{0},\ldots,p_{m}\ge 0;\\ p_{0}+\cdots+p_{m}=k_{j}-m}}z_{1}^{k_{1}}\cdots z_{j-1}^{k_{j-1}}(\lambda_{m}+h)^{p_{0}+p_{m}}\left(\prod_{l=1}^{m-1}(\lambda_{l})^{p_{l}}\right)z_{j+1}^{k_{j+1}}\cdots z_{n}^{k_{n}}.
\end{align}
\item[(ii)]\label{Lem6ii} If $k_{j_{l}}\ge i_{l}$ for $1\le l\le k,$ then
\begin{align}
\label{Lem6r4}&\left(\prod_{l=1}^{k}D_{j_{l}^{i_{l}}}^{\lambda_{l,0},\ldots,\lambda_{l,i_{l}}}\right)f_{N}(z_{1},\ldots,z_{n})=\left(\prod_{l=1}^{k}\sum_{\substack {p_{l,0},\ldots,p_{l,i_{l}}\ge 0;\\p_{l,0}+\cdots+ p_{l,i_{l}}=k_{j_{l}}-i_{l}}}\right)\left[\left(\prod_{\substack{m=1\\m\notin\{j_{1},\ldots,j_{k}\}}}^{n}z_{m}^{k_{m}}\right)\prod_{r=1}^{k}\prod_{s=0}^{i_{r}}(\lambda_{r,s})^{p_{r,s}}\right].
\end{align}
\end{itemize}
\end{lma}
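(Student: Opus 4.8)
The plan is to reduce both assertions to a single scalar identity: the divided difference of a monomial $z\mapsto z^{k}$ of order $p$ equals the complete homogeneous symmetric polynomial of degree $k-p$ in the nodes, i.e.
\[
z^{k}[\mu_{0},\ldots,\mu_{p}]=\sum_{\substack{q_{0},\ldots,q_{p}\ge 0;\\ q_{0}+\cdots+q_{p}=k-p}}\mu_{0}^{q_{0}}\cdots\mu_{p}^{q_{p}},\qquad k\ge p,
\]
with the convention that the right-hand side vanishes when $k<p$. Once this is available as a polynomial identity valid for arbitrary, possibly coinciding, nodes $\mu_{0},\ldots,\mu_{p}\in\C$, part (i) is the specialization $p=m$, $\mu_{l-1}=\lambda_{l}$ for $1\le l\le m-1$ and $\mu_{m-1}=\mu_{m}=\lambda_{m}+h$, after which one merely regroups the two equal nodes and relabels the summation index to match \eqref{Lem6r2}. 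Part (ii) then follows by applying the coordinatewise divided differences one variable at a time: since the operators $D_{j_{l}^{i_{l}}}^{\lambda_{l,0},\ldots,\lambda_{l,i_{l}}}$ act on disjoint coordinates and commute by \eqref{Divdiff4}, acting on a single monomial $z_{1}^{k_{1}}\cdots z_{n}^{k_{n}}$ leaves every factor $z_{m}^{k_{m}}$ with $m\notin\{j_{1},\ldots,j_{k}\}$ untouched and replaces each $z_{j_{l}}^{k_{j_{l}}}$ by its own divided difference, evaluated through the scalar identity; extending by linearity over the monomials of $f_{N}$ yields \eqref{Lem6r4}.

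The scalar identity itself I would prove by induction on the order $p$. The base case $p=0$ is $z^{k}[\mu_{0}]=\mu_{0}^{k}$, which is the definition. For the inductive step I would first assume the nodes distinct and use the recursion \eqref{Divdiff} in the last two slots, writing $z^{k}[\mu_{0},\ldots,\mu_{p}]$ as the difference quotient of $z^{k}[\mu_{0},\ldots,\mu_{p-2},\,\cdot\,]$ at $\mu_{p}$ and $\mu_{p-1}$. Applying the induction hypothesis to the two order-$(p-1)$ divided differences reduces the step to the elementary symmetric-function identity
\[
\frac{h_{d}(A,\mu_{p})-h_{d}(A,\mu_{p-1})}{\mu_{p}-\mu_{p-1}}=h_{d-1}(A,\mu_{p-1},\mu_{p}),\qquad A=(\mu_{0},\ldots,\mu_{p-2}),
\]
where $h_{d}$ denotes the complete homogeneous symmetric polynomial of degree $d$; this in turn is immediate from the splitting $h_{d}(A,x)=\sum_{i\ge 0}x^{i}h_{d-i}(A)$ together with $x^{i}-y^{i}=(x-y)\,h_{i-1}(x,y)$. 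Finally, since both sides of the monomial identity are polynomials in $(\mu_{0},\ldots,\mu_{p})$ that agree on the dense set where the nodes are distinct, they agree identically, which disposes of the confluent (repeated-node) case governed by the limit in \eqref{Divdiff}. This induction is exactly the content that upgrades the single-variable \cite[Lemma 4.2]{Sk15} to the present setting.

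With the scalar identity in hand, the remaining work is organizational. For part (i) the only subtlety is that the node $\lambda_{m}+h$ occurs twice; handling this is precisely why the identity must be known for coinciding nodes, and the displayed exponent $p_{0}+p_{m}$ in \eqref{Lem6r2} records the merging of the two corresponding summation indices. For part (ii) I would make the commutation \eqref{Divdiff4} explicit so as to justify peeling off one coordinate at a time, and keep careful track of which product indices are summed; the factorization of the right-hand side of \eqref{Lem6r4} into a product over $l$ of independent sums is a direct consequence of the variables $z_{j_{1}},\ldots,z_{j_{k}}$ being distinct and the divided differences acting separately on each.

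I expect the main obstacle to be the inductive step of the scalar monomial identity, specifically matching the recursion \eqref{Divdiff}, which differences in the last two arguments and uses the implicit symmetry of divided differences, against the symmetric-function identity above, and then arguing the passage to coinciding nodes cleanly rather than term by term. The multivariate features, namely disjoint coordinates, commutativity, and linear extension over the monomials of $f_{N}$, add only bookkeeping and not genuine difficulty; the combinatorial care needed to align the summation indices in \eqref{Lem6r2} and \eqref{Lem6r4} with the symmetric-polynomial expansion is the part most prone to error.
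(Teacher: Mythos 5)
Your proposal is correct and follows essentially the same route as the paper's proof: reduce to monomials by linearity, invoke the identity expressing a divided difference of $z^{k}$ as a complete homogeneous symmetric polynomial in the nodes (the paper's \eqref{Lem6r3}, which it merely declares straightforward and which you prove by induction on the order), and then obtain \eqref{Lem6r4} by applying the divided differences coordinatewise. The only minor divergence is in part (i): the paper evaluates the confluent divided difference by rewriting it, via \eqref{Divdiff}, as a $t$-derivative of the distinct-node formula at $t=\lambda_{m}+h$ and differentiating, whereas you extend the symmetric-polynomial identity to coinciding nodes by a continuity/density argument inside the induction --- two standard and equivalent ways of handling the repeated node.
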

\begin{proof}
Because of the linearity of the divided difference, instead of \eqref{Lem6r1}, it will be sufficient to work with $P(z_{1},\ldots,z_{n})=z_{1}^{k_{1}}\cdots z_{n}^{k_{n}},$  where $k_{1},\ldots,k_{n}\in\N\cup\{0\}$. The following conclusion is straightforward.
\begin{align}
\nonumber P(z_{1},\ldots,z_{j-1},&[\lambda_{1},\ldots,\lambda_{m}],z_{j+1},\ldots,z_{n})\\
\label{Lem6r3}&=\sum_{\substack {p_{0},\ldots,p_{m-1}\ge 0;\\ p_{0}+\cdots+p_{m-1}=k_{j}-(m-1)}}z_{1}^{k_{1}}\cdots z_{j-1}^{k_{j-1}} \left(\prod_{l=1}^{m}(\lambda_{l})^{p_{l-1}}\right)z_{j+1}^{k_{j+1}}\cdots z_{n}^{k_{n}}.
\end{align}
On the other hand, applying \eqref{Divdiff} and using \eqref{Lem6r3}, we conclude
\begin{align}
\nonumber P(z_{1},\ldots,z_{j-1},[\lambda_{1}&,\ldots,\lambda_{m-1},\lambda_{m}+h,\lambda_{m}+h],z_{j+1},\ldots,z_{n})\\
\nonumber&=\ddt\bigg|_{t=\lambda_{m}+h}^{}P(z_{1},\ldots,z_{j-1},[\lambda_{1},\ldots,\lambda_{m-1},t],z_{j+1},\ldots,z_{n})\\
\nonumber&=\sum_{\substack {p_{0},\ldots,p_{m}\ge 0;\\ p_{0}+\cdots+p_{m}=k_{j}-m}}z_{1}^{k_{1}}\cdots z_{j-1}^{k_{j-1}}(\lambda_{m}+h)^{p_{0}+p_{m}}\left(\prod_{l=1}^{m-1}(\lambda_{l})^{p_{l}}\right)z_{j+1}^{k_{j+1}}\cdots z_{n}^{k_{n}},
\end{align}
which proves \eqref{Lem6r2}. Subsequent evaluations of the divided differences to \eqref{Lem6r3} prove \eqref{Lem6r4}.
\end{proof}

The following differentiation formula was stated in \cite[Lemma 2.1]{PoSkSu14}, where the derivatives exist in the uniform operator topology, but here we provide a short proof of the existence of the derivatives in Schatten norms, which is necessary in our context. 
\begin{lma}\label{Lem1}
Let $H$ and $V$ be two bounded operators and let $p$ be a natural number. Then
\begin{align}
&\label{Lem1r1}(H+V)^p-{H}^p=\sum_{\substack {p_0,\,p_1\ge 0;\\p_0+p_1=p-1}}(H+V)^{p_0}V{H}^{p_1},
\end{align}  
and for $n\le p$,\\
\begin{align}
&\label{Lem1r2}\frac{d^n}{ds^n}\bigg|_{s=t}^{}(H+sV)^p=n!\sum_{\substack {p_0,\ldots,\,p_n\ge 0;\\ p_0+\cdots+p_n=p-n}}\left[\prod_{i=0}^{n-1}\big((H+tV)^{p_i}\,V\big)(H+tV)^{p_n}\right],
\end{align}
where the derivative \eqref{Lem1r2} is evaluated in the operator norm. In addition, if $V\in\mathcal{L}_{r}(\Mcal,\tr)$, then \eqref{Lem1r2} exists in $\|\cdot\|_{\frac{r}{n}}$ for $r>n$ and for $r\le n$ it exists in $\|\cdot\|_{1}$.
 
\end{lma}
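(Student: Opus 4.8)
The plan is to prove the algebraic identities \eqref{Lem1r1} and \eqref{Lem1r2} first as operator-norm statements, and then upgrade to the Schatten-norm convergence at the end. For \eqref{Lem1r1}, I would use the telescoping identity $(H+V)^p-H^p=\sum_{p_1=0}^{p-1}(H+V)^{p-1-p_1}\,V\,H^{p_1}$, which follows from writing $(H+V)^p-H^p=\sum_{q=0}^{p-1}\big((H+V)^{p-q}H^{q}-(H+V)^{p-1-q}H^{q+1}\big)$ and factoring $(H+V)-H=V$ out of each summand; reindexing with $p_0=p-1-p_1$ gives exactly the stated sum over $p_0+p_1=p-1$. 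This is the standard ``difference of powers'' resolvent-type expansion and requires no smoothness, only associativity.

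For the derivative formula \eqref{Lem1r2}, the cleanest route is induction on $n$. For $n=1$, I would differentiate $s\mapsto(H+sV)^p$ directly by the product rule: $\frac{d}{ds}(H+sV)^p=\sum_{p_1=0}^{p-1}(H+sV)^{p_1}\,V\,(H+sV)^{p-1-p_1}$, which matches \eqref{Lem1r2} for $n=1$ after relabeling. The justification that this derivative exists in operator norm is that $s\mapsto(H+sV)^p$ is a (matrix-coefficient) polynomial in $s$ with coefficients in $\mathcal B(\hil)$, so the difference quotient converges in operator norm by the usual estimate on $\|(H+sV)^{p}-(H+tV)^p\|$. For the inductive step from $n$ to $n+1$, I would differentiate the right-hand side of \eqref{Lem1r2} term by term; each factor $(H+tV)^{p_i}$ contributes, via the $n=1$ computation, a further insertion of a $V$, and collecting all the resulting terms and reindexing the exponent constraint from $p_0+\cdots+p_n=p-n$ to $p_0+\cdots+p_{n+1}=p-(n+1)$ produces the right combinatorial structure. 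Tracking the factorial is the bookkeeping point: differentiating an $n$-fold-$V$ expression with $n!$ out front yields $(n+1)$ new terms each still weighted by $n!$, and the reindexing merges them so that the $(n+1)$-fold sum appears with coefficient $(n+1)\cdot n!=(n+1)!$.

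For the Schatten-norm upgrade, I would argue as follows. The $n$-th derivative in \eqref{Lem1r2} is a finite sum of terms, each of which is a product of $p-n$ factors of the form $(H+tV)^{\bullet}$ (bounded) together with exactly $n$ factors of $V$. When $V\in\mathcal L_r(\Mcal,\tr)$, I would apply the noncommutative Hölder inequality: a product containing $n$ factors each in $\mathcal L_r$ lies in $\mathcal L_{r/n}$ with $\|X_1\cdots X_n\|_{r/n}\le\prod\|X_i\|_r$ (the bounded factors being absorbed by $\|\cdot\|\le\|H+tV\|^{\bullet}$), giving membership in $\mathcal L_{r/n}$ when $r>n$ and in $\mathcal L_1$ when $r\le n$ (using $\mathcal L_r\subseteq\mathcal L_1$-type inclusions after Hölder with the boundedness of the remaining factors). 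To see the difference quotient converges in the relevant Schatten norm and not merely in operator norm, I would show the difference quotient is itself a finite sum of such products of $V$-factors and bounded factors, depending polynomially on $s$, so the same Hölder estimate controls the error in $\|\cdot\|_{r/n}$ (resp.\ $\|\cdot\|_1$) and the convergence is genuinely in that norm.

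The main obstacle I anticipate is purely combinatorial: correctly reindexing the exponent-partition constraint $p_0+\cdots+p_n=p-n$ through the inductive differentiation step and verifying that the $(n+1)!$ factor emerges cleanly, rather than an analytic difficulty. The analytic content is light because everything is a polynomial in $s$; the Hölder step for the Schatten norms is routine once the expression is written as a sum of products with a fixed number of $V$-factors. I would be slightly careful in the boundary case $r\le n$, where one cannot simply write $r/n\ge 1$, and instead should note that each product has $n\ge r$ factors in $\mathcal L_r$, hence lies in $\mathcal L_1$ by Hölder together with the inclusion of higher Schatten classes into $\mathcal L_1$ after absorbing the bounded factors.
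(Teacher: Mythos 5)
Your proposal is correct and follows essentially the same route as the paper's proof: both establish \eqref{Lem1r2} by induction on $n$, expanding the difference quotient of the $(n-1)$-st derivative via the difference-of-powers identity \eqref{Lem1r1} (your term-by-term differentiation is the same algebra), verifying the $(n+1)\cdot n!=(n+1)!$ bookkeeping, and then controlling the error by H\"older's inequality with $n$ factors of $V$ measured in $\|\cdot\|_{r}$ and the remaining factors in operator norm, which gives convergence directly in $\|\cdot\|_{r/n}$; the case $r\le n$ is likewise handled in the paper by the inclusion $\mathcal{L}_{r}(\Mcal,\tr)\subseteq\mathcal{L}_{n}(\Mcal,\tr)$ valid for bounded operators. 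One wording correction: in your last step it is the \emph{lower} ideals that embed into higher ones (e.g.\ $\mathcal{L}_{r/n}\subseteq\mathcal{L}_{1}$ because $r/n\le 1$), not ``higher Schatten classes into $\mathcal{L}_{1}$,'' though the substance of your estimate is right.
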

\begin{proof}
The representation \eqref{Lem1r1} is trivial. Assume $V\in\mathcal{L}_{r}(\Mcal,\tr)$ and $r> n$. First we provide a proof of the existence of \eqref{Lem1r2} in $\|\cdot\|_{\frac{r}{n}}$ for $r>n$.  The proof goes by induction on $n$. The cases $n=1$ and $n=2$ follow from {\cite[Lemma 3.1]{Sk15}}. Suppose \eqref{Lem1r2} is true for $n=k-1~(2\le k\le p)$, that is,
\begin{align}
\label{Lem1r3}\frac{d^{k-1}}{ds^{k-1}}\bigg|_{s=t}^{}(H+sV)^p=(k-1)!\sum_{\substack {p_0,\ldots,p_{k-1}\ge 0;\\ p_0+\cdots+p_{k-1}=p-(k-1)}}\left[\prod_{i=0}^{k-2}\big((H+tV)^{p_i}\,V\big)(H+tV)^{p_{k-1}}\right]
\end{align}
exists in $\|\cdot\|_{\frac{r}{k-1}}$. Now by
using \eqref{Lem1r1} we have
\begin{align}
\nonumber \varphi(\epsilon)&=\frac{1}{\epsilon}\left[\frac{d^{k-1}}{ds^{k-1}}\bigg|_{s=t+\epsilon}^{}(H+sV)^{p}-\frac{d^{k-1}}{ds^{k-1}}\bigg|_{s=t}^{}(H+sV)^p\right]\\
\nonumber&=\frac{(k-1)!}{\epsilon}\sum_{\substack {p_0,\ldots,\,p_{k-1}\ge 0;\\ p_0+\cdots+p_{k-1}=p-(k-1)}}\left[\sum_{i=0}^{k-1}g_{i,p_{0},\ldots,p_{k-1}}(\epsilon)\right]\\
\label{Lem1r4}&=(k-1)!\sum_{\substack {p_0,\ldots,\,p_{k}\ge 0;\\ p_0+\cdots+p_{k}=p-k}}\left[\sum_{i=0}^{k-1}h_{i,p_{0},\ldots,p_{k}}(\epsilon)\right],
\end{align}
where
\begin{align}
\nonumber&g_{i,p_{0},\ldots,p_{k-1}}(\epsilon)\\
\nonumber&=\prod_{\substack{j=0\\i>0}}^{i-1}\bigg((H+(t+\epsilon)V)^{p_{j}}V\bigg)\bigg((H+(t+\epsilon)V)^{p_{i}}-(H+tV)^{p_{i}}\bigg)\prod_{\substack{q=i+1\\i<k-1}}^{k-1}\bigg(V(H+tV)^{p_{q}}\bigg),
\end{align}
and
\begin{align}
\nonumber&h_{i,p_{0},\ldots,p_{k}}(\epsilon)=\prod_{j=0}^{i}\bigg((H+(t+\epsilon)V)^{p_{j}}V\bigg)(H+tV)^{p_{i+1}}\prod_{\substack{q=i+2\\i<k-1}}^{k}\bigg(V(H+tV)^{p_{q}}\bigg).
\end{align}
Again by using the representation \eqref{Lem1r1} we obtain

\begin{align}
\nonumber\psi(\epsilon)&=\varphi(\epsilon)-k!\sum_{\substack{p_0,\ldots,\,p_k\ge0;\\p_0+\cdots+p_k=p-k}}\left[\prod_{i=0}^{k-1}\big((H+tV)^{p_{i}}\,V\big)(H+tV)^{p_{k}}\right]\\
\nonumber&=\epsilon(k-1)!\sum_{\substack {p_0,\ldots,p_k\ge 0;\\p_0+\cdots+p_k=p-k}}\Bigg[\sum_{i=0}^{k-1}\,(k-i)\sum_{\substack {p_{i,0},\,p_{i,1}\ge 0;\\p_{i,0}+p_{i,1}=p_{i}-1}}~\prod_{\substack{j=0\\i>0}}^{i-1}\bigg((H+(t+\epsilon)V)^{p_{j}}V\bigg)\\
\label{Lem1r5}&\hspace*{1.5in}\times(H+(t+\epsilon )V)^{p_{i,0}}\,V(H+tV)^{p_{i,1}}\prod_{q=i+1}^{k}\bigg(V(H+tV)^{p_{q}}\bigg)\Bigg].
\end{align}
Therefore by the triangle inequality from  \eqref{Lem1r5} we get
\begin{align}
\nonumber\|\psi(\epsilon)\|_{\frac{r}{k}}&\le\epsilon(k-1)!\,\|V\|\,\|V\|_{r}^{k}\,K,
\end{align}
where $K$ is some constant and hence it completes the cycle of the induction on $n$. This concludes the proof for the existence of \eqref{Lem1r2} in $\|\cdot\|_{\frac{r}{n}}$. Similarly if $r\le n,$ then $V\in\mathcal{L}_{n}(\Mcal,\tr),$ and it immediately implies the existence of $\frac{d^{n-1}}{ds^{n-1}}\big|_{s=t}(H+sV)^{p}$ in $\|\cdot\|_{\frac{n}{n-1}}.$ This ensures that \eqref{Lem1r2} exists in $\|\cdot\|_{1}$.
\end{proof}	
To estimate the traces of higher-order derivatives of multivariate operator function $t\mapsto f(\X(t))$, it is necessary to obtain an explicit expression of the higher-order derivatives of the above operator function. The following lemma provides the required expression.
\begin{lma}\label{Lem2}
Assume Notations \ref{Notations} and let $f\in\mathcal{A}(\D)$. Then for $m\in\N,$ the G\^{a}teaux derivative $\dmds\bigg|_{s=t}^{}f(\bm{X}_n(s))$ exists and the map $[0,1]\ni t\mapsto \dmds\bigg|_{s=t}^{}f(\bm{X}_n(s))$ is continuous in the operator norm. Moreover, if $f$ is given by  \eqref{srf}, then\\
\begin{align}
\label{Lem2r1}&\dmds\bigg|_{s=t}^{}f(\bm{X}_n(s))=\sum_{k=1}^{m}\left[\sum_{1\le j_{1}<\cdots<j_{k}\le n}^{}\left(\sum_{\substack{i_{1},\ldots,i_{k}\ge 1;\\i_1+\cdots+i_{k}=m}}\left(\frac{m!}{i_{1}!\cdots i_{k}!}\,\,D_{f}^{j_{1}^{i_1},\ldots,j_{k}^{i_{k}}}(t)\right)\right)\right],
\end{align}
where
\begin{align}
\nonumber&D_{f}^{j_{1}^{i_1},\ldots,j_{k}^{i_{k}}}(t)\\
\nonumber&=\sum_{k_1,\ldots,k_n\ge0}c_{k_1,\ldots,k_n}T_{k_1,\ldots,k_{j_{1}-1}}(\bm{X}_{j_{1}-1}(t)) \prod_{l=1}^{k-1}\left[\frac{d^{i_{l}}}{ds^{i_{l}}}\bigg|_{s=t}^{}(X_{j_{l}}(s))^{k_{j_{l}}}T_{k_{j_{l}+1},\ldots,k_{j_{l+1}-1}}({}_ {j_{l}+1}\bm{X}_{j_{l+1}-1}(t))\right]\\
\label{Lem2r10}&\hspace{2.8in} \times\frac{d^{i_{k}}}{ds^{i_{k}}}\bigg|_{s=t}^{}(X_{j_{k}}(s))^{k_{j_{k}}}T_{k_{j_{k}+1},\ldots,k_{n}}({}_ {j_{k}+1}\bm{X}_{n}(t)).
\end{align}

Furthermore, if $V_{j}=B_j-A_j\in\mathcal{L}_{r}(\Mcal,\tr)$, then  $t\mapsto\dmds\bigg|_{s=t}^{}f(\bm{X}_n(s))$ exists and is continuous on $[0, 1]$ in $\|\cdot\|_{\frac{r}{m}}$ for $r>m$ and in $\|\cdot\|_{1}$ for $r\le m$.	
\end{lma}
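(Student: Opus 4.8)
The plan is to first establish the expression \eqref{Lem2r1}--\eqref{Lem2r10} for a single monomial of the series \eqref{srf}, then sum over the multi-index $\bm{k}=(k_1,\ldots,k_n)$, the crucial point being the justification of termwise $m$-fold differentiation of the resulting operator-valued power series. Throughout I use that, since $\A,\B,\V\in\Cmn$, the operators $X_i(s)=A_i+sV_i$ and $X_j(s)=A_j+sV_j$ commute for all $i,j$ and all $s$, whereas \emph{within} a single coordinate $A_j$ and $V_j$ need not commute.

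First I would fix $\bm{k}$ and differentiate the monomial $X_1(s)^{k_1}\cdots X_n(s)^{k_n}$. Writing $g_j(s)=X_j(s)^{k_j}$ and applying the Leibniz rule for the $m$-th derivative of a product of $n$ mutually commuting factors gives
\begin{align*}
\frac{d^m}{ds^m}\prod_{j=1}^n g_j(s)=\sum_{\substack{i_1,\ldots,i_n\ge 0;\\ i_1+\cdots+i_n=m}}\frac{m!}{i_1!\cdots i_n!}\prod_{j=1}^n g_j^{(i_j)}(s).
\end{align*}
Retaining only the coordinates with $i_j\ge1$, say $1\le j_1<\cdots<j_k\le n$ with orders $i_1,\ldots,i_k\ge1$ and $\sum_l i_l=m$, the consecutive undifferentiated factors collapse into the blocks $T_{\ldots}(\cdot)$ of Notations \ref{Notations}(iv), while each differentiated factor $\frac{d^{i_l}}{ds^{i_l}}\big|_{s=t}X_{j_l}(s)^{k_{j_l}}$ is supplied by \eqref{Lem1r2} of Lemma \ref{Lem1}. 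This reproduces \eqref{Lem2r10} for the single monomial $\bm{k}$: commutativity across coordinates is what allows the product to be written in the increasing order displayed in \eqref{Lem2r10}, while the non-commutativity of $A_{j_l}$ and $V_{j_l}$ is precisely encoded inside \eqref{Lem1r2}.

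The heart of the argument is to sum over $\bm{k}$ and interchange $\frac{d^m}{ds^m}$ with $\sum_{\bm{k}}$. I would do this by showing that the formally differentiated series converges uniformly in $s\in[0,1]$ in the operator norm; combined with convergence of $f(\bm{X}_n(s))$ itself, the standard theorem on uniform convergence of derivatives then yields differentiability of the sum together with \eqref{Lem2r1}. For the uniform bound, \eqref{Lem1r2}, submultiplicativity and $\|X_j(s)\|\le1$ give an operator-norm estimate of each differentiated monomial of the form $C\,|\bm{k}|^{m}\prod_{l}\|V_{j_l}\|^{i_l}$, where $|\bm{k}|=k_1+\cdots+k_n$. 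Since $f\in\mathcal{A}(\D)$ converges absolutely on a polydisc of radius exceeding $1$, one has $\sum_{\bm{k}}|c_{\bm{k}}|\rho^{|\bm{k}|}<\infty$ for some $\rho>1$, so the geometric factor $\rho^{-|\bm{k}|}$ dominates the polynomial growth $|\bm{k}|^m$ and the differentiated series is majorized by a convergent one, uniformly in $s$. Norm-continuity of $t\mapsto\frac{d^m}{ds^m}\big|_{s=t}f(\bm{X}_n(s))$ then follows because each $D_f^{j_1^{i_1},\ldots,j_k^{i_k}}(t)$ depends continuously on $t$ through $X_j(t)=A_j+tV_j$, and the convergence is uniform on $[0,1]$.

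Finally, for the Schatten refinement assume $V_j\in\mathcal{L}_r(\Mcal,\tr)$. Expanding the single-coordinate derivatives via \eqref{Lem1r2}, each $D_f^{j_1^{i_1},\ldots,j_k^{i_k}}(t)$ is a norm-convergent sum of products containing exactly $m$ factors drawn from the $V_{j_l}$ (with $V_{j_l}$ occurring $i_l$ times, $\sum_l i_l=m$) together with bounded cofactors that are powers of the contractions $X_j(t)$. By the noncommutative Hölder inequality such a product lies in $\mathcal{L}_{r/m}(\Mcal,\tr)$ with norm controlled by $\prod_l\|V_{j_l}\|_r^{i_l}$ when $r>m$; when $r\le m$, interpolation (using $V_j\in\Mcal\cap\mathcal{L}_r$) gives $V_j\in\mathcal{L}_m$, so the $m$-fold product lies in $\mathcal{L}_1$. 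Rerunning the convergence estimate of the previous paragraph with the operator norm on the $V$-factors replaced by the appropriate $\mathcal{L}_r$- (resp. $\mathcal{L}_m$-) norm and the contraction cofactors bounded by $1$ shows that the series converges in $\|\cdot\|_{r/m}$ (resp. $\|\cdot\|_1$), the single-coordinate existence in these norms being exactly the last assertion of Lemma \ref{Lem1}. I expect the main obstacle to be the uniform-convergence estimate of the third paragraph: one must organize the Leibniz expansion so that the polynomial-in-$\bm{k}$ growth of the $m$-th derivative is made explicit and then absorbed by the geometric decay of the Taylor coefficients coming from holomorphy on the slightly larger polydisc.
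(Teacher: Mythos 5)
Your proposal is correct, but it reaches \eqref{Lem2r1} by a genuinely different route than the paper. The paper proves the formula by induction on $m$: the base cases $m=1,2$ are imported from \cite[Lemmas 3.2, 4.4]{Sk15}, and the induction step differentiates the $(q-1)$-st order formula and then performs a delicate regrouping and reindexing of the resulting sums (the $\mathcal{S}_l$, $\mathcal{S}'_l$ bookkeeping in \eqref{Lem2r5}--\eqref{Lem2r9}) to match the $q$-th order formula. You instead obtain the entire combinatorial structure in one stroke by applying the multinomial Leibniz rule factor-wise to each monomial $X_1(s)^{k_1}\cdots X_n(s)^{k_n}$, with the single-coordinate derivatives supplied by \eqref{Lem1r2}, and you then make explicit the interchange of $\frac{d^m}{ds^m}$ with $\sum_{\bm k}$ via uniform convergence of the differentiated series. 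That interchange is handled only implicitly in the paper (through the bound \eqref{Lem2r4}, which is exactly your majorization: the coefficients $\mathscr{K}_{k_1,\ldots,k_n}$ grow polynomially in $|\bm k|$ of degree $m$, and absolute convergence of the differentiated Taylor series on the closed polydisc, i.e.\ Cauchy-estimate geometric decay, absorbs it). Your route is more elementary and self-contained --- no base cases from \cite{Sk15}, no term redistribution --- while the paper's induction keeps everything inside the $D_f$ formalism it reuses later. The Schatten-norm refinement is handled the same way in both (Lemma \ref{Lem1} plus H\"older). One small correction: you attribute the ordering of the factors in \eqref{Lem2r10} to commutativity across coordinates, but the Leibniz rule for operator-valued products preserves the left-to-right order of the factors automatically, so no commutativity is needed for that step; cross-coordinate commutativity matters elsewhere (e.g.\ in the trace estimates of Theorem \ref{Thm1} via the joint spectral measure), not here. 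This is a harmless misattribution, not a gap.
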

We would like to mention that a similar formula (like \eqref{Lem2r1}) for higher-order derivatives of a {\sf single variable} operator function in unitary path was obtained in \cite[Theorem 5.3.4]{Sk19}, but in our context, we are dealing with multivariate operator functions and the path we consider here is linear, and hence our formula \eqref{Lem2r1} is more complicated compared to the formula given in \cite[Theorem 5.3.4]{Sk19}. {\sf In other words, we need more intricate combinatorics to get our formula, which is one of the major difficulties we face in this direction. Since the proof of this lemma is very technical, hence for readers' convenience, we discuss the proof in detail.}\vspace*{-0.2cm}
\begin{proof}[Proof of Lemma \ref{Lem2}]
We only establish the existence of $\dmds\bigg|_{s=t}^{}f(\bm{X}_n(s))$ in the respective norms; and the continuity of the map $[0,1]\ni t\mapsto\dmds\bigg|_{s=t}^{}f(\bm{X}_n(s))$ can be proved completely analogously. Let $f\in\mathcal{A}(\D)$ be given by \eqref{srf}. Our proof consists of two parts: Firstly, we prove the formula \eqref{Lem2r1}, and then by showing the existence of $\dmds\bigg|_{s=t}^{}f(\bm{X}_n(s))$ in the respective norms, we conclude the proof. Denote
\begin{align}
\nonumber g_{j_{1}^{i_1},\ldots,j_{k}^{i_{k}}}(t)=&T_{k_1,\ldots,k_{j_{1}-1}}(\bm{X}_{j_{1}-1}(t))\prod_{q=1}^{k-1}\left[\prod_{l=0}^{i_{q}-1}\big((X_{j_{q}}(t))^{p_{q,l}}V_{j_{q}}\big)T_{p_{q,i_{q}},k_{j_{q}+1},\ldots,k_{j_{q+1}-1}}({}_ {j_{q}}\bm{X}_{j_{q+1}-1}(t))\right]\\
\nonumber&\hspace{1.57in} \times\prod_{l=0}^{i_{k}-1}\big((X_{j_{k}}(t))^{p_{k,l}}V_{j_{k}}\big)T_{p_{k,i_{k}},k_{j_{k}+1},\ldots,k_{n}}({}_ {j_{k}}\bm{X}_{n}(t)),
\end{align}
where $\sum_{l=1}^{k}i_{l}=m$.
From \eqref{Lem1r2} it is straightforward to see that
\begin{align}
\label{Lem2r3}&D_{f}^{j_{1}^{i_1},\ldots,j_{k}^{i_{k}}}(t)=\sum_{k_1,\ldots,k_n\ge0}c_{k_1,\ldots,k_n}\left[\,\left(\prod_{l=1}^{k}i_{l}!\sum_{\substack {p_{l,0},\ldots,p_{l,i_{l}}\ge 0;\\p_{l,0}+\cdots+ p_{l,i_{l}}=k_{j_{l}}-i_{l}}}\right)g_{j_{1}^{i_1},\ldots,j_{k}^{i_{k}}}(t)\right],
\end{align}
which implies
\begin{align}
\label{Lem2r4}&\sup_{t\,\in\,[0, 1]}\left\|D_{f}^{j_{1}^{i_1},\ldots,j_{k}^{i_{k}}}(t)\right\|\le\left(\prod_{l=1}^{k}\|V_{j_{l}}\|^{i_{l}}\right)\sum_{k_1,\ldots,k_n\ge0}\mathscr{K}_{k_{1},\ldots,k_{n}}\,|c_{k_1,\ldots,k_n}|,
\end{align}
where $\mathscr{K}_{k_{1},\ldots,k_{n}}$ is the co-efficient of \,$\frac{\partial^{m}}{\partial z_{j_{1}}^{i_{1}}\cdots \partial z_{j_{k}}^{i_{k}}}\left(z_{1}^{k_{1}}\cdots z_{n}^{k_{n}}\right)$. Note that the series in the right-hand side of \eqref{Lem2r4} converges since all the partial derivatives of \eqref{srf} converge absolutely, and therefore, the right-hand side of \eqref{Lem2r1} makes sense. Furthermore, the sum $\sum_{p_{l,0}+\cdots+ p_{l,i_{l}}=k_{j_{l}}-i_{l}}^{}$ is empty for $k_{j_{l}}< i_{l}$.

We use induction on $m$ to prove the formula \eqref{Lem2r1}. The cases $m=1$ and $2$ follow from {\cite[Lemmas 3.2, 4.4]{Sk15}}. We assume that \eqref{Lem2r1} holds for $m=q-1~(q\ge 2)$, that is,
\begin{equation}\label{Lem2r2}
\frac{d^{q-1}}{ds^{q-1}}\bigg|_{s=t}^{}f(\bm{X}_n(s))=\sum_{k=1}^{q-1}\left[\sum_{1\le j_{1}<\cdots<j_{k}\le n}^{}\left(\sum_{\substack{i_{1},\ldots,i_{k}\ge 1 ;\\ i_1+\cdots+i_{k}=q-1}}\left(\frac{(q-1)!}{i_{1}!\cdots i_{k}!}\,\,D_{f}^{j_{1}^{i_1},\ldots,j_{k}^{i_{k}}}(t)\right)\right)\right]
\end{equation}
and verify below that it holds for $m=q$. Now to prove the formula \eqref{Lem2r1} is also true for $m=q$, we  need to show that after taking the derivative of \eqref{Lem2r2} the sum in the right-hand side should coincide with the sum in \eqref{Lem2r1} for $m=q$. To that aim, after performing the differentiation of \eqref{Lem2r2}, we need to analyze the term-by-term expressions of the derivative at each step $k\in\{1,\ldots,q-1\}$.

The derivative of the term for $k=1$ in \eqref{Lem2r2} is given by
\begin{align}
\nonumber\frac{d}{dt}\left(\sum_{1\le j_{1}\le n}^{}D_{f}^{j_{1}^{q-1}}(t)\right)&=\sum_{1\le j_{1}\le n}^{}D_{f}^{j_{1}^{q}}(t)+\sum_{1\le j_{1}<j_{2}\le n}^{}D_{f}^{j_{1}^{q-1},j_{2}}(t)+\sum_{1\le j_{2}<j_{1}\le n}^{}D_{f}^{j_{2},j_{1}^{q-1}}(t)\\
\nonumber&=\sum_{1\le j_{1}\le n}^{}D_{f}^{j_{1}^{q}}(t)+\sum_{1\le j_{1}<j_{2}\le n}^{}\left(D_{f}^{j_{1}^{q-1},j_{2}}(t)+D_{f}^{j_{1},j_{2}^{q-1}}(t)\right)\\
\label{Lem2r5}&=:\mathcal{S}_{1}+\mathcal{S}'_{1},
\end{align}
where $\mathcal{S}_{1}$ is the exact term for $k=1$ in \eqref{Lem2r1} for $m=q$. In addition, we have an extra term $\mathcal{S}'_{1}$ with it, this will help us in the next step to get the exact term for $k=2$ in \eqref{Lem2r1} for $m=q$.

The derivative of the term for $k=2$ in \eqref{Lem2r2} is given by
\begin{align}
\nonumber&\frac{d}{dt}\left[\sum_{1\le j_{1}<j_{2}\le n}^{}\left(\sum_{\substack{i_{1},i_{2}\ge 1;\\ i_1+i_{2}=q-1}}\left(\frac{(q-1)!}{i_{1}!\, i_{2}!}\,\,D_{f}^{j_{1}^{i_1},j_{2}^{i_{2}}}(t)\right)\right)\right]\\
\nonumber&=\sum_{1\le j_{1}<j_{2}\le n}^{}\left[\,\underbrace{\sum_{\substack{i_{1},i_{2}\ge 1;\\ i_1+i_{2}=q-1}}\frac{(q-1)!}{i_{1}!\,i_{2}!}\,D_{f}^{j_{1}^{i_{1}+1},j_{2}^{i_{2}}}(t)}_{(i)}+\underbrace{\sum_{\substack{i_{1},i_{2}\ge 1;\\ i_1+i_{2}=q-1}}\frac{(q-1)!}{i_{1}!\,i_{2}!}\,D_{f}^{j_{1}^{i_{1}},j_{2}^{i_{2}+1}}(t)}_{(ii)}\right]\\
\nonumber&\quad+\sum_{1\le j_{1}<j_{2}<j_{3}\le n}^{}\left[\,\sum_{\substack{i_{1},i_{2}\ge 1,i_{3}=1;\\ i_1+i_{2}=q-1}}\frac{(q-1)!}{i_{1}!\,i_{2}!}\left(D_{f}^{j_{1}^{i_{1}},j_{2}^{i_{2}},j_{3}^{i_{3}}}(t)+D_{f}^{j_{1}^{i_{1}},j_{2}^{i_{3}},j_{3}^{i_{2}}}(t)+D_{f}^{j_{1}^{i_{3}},j_{2}^{i_{1}},j_{3}^{i_{2}}}(t)\right)\right]\\
\label{Lem2r6}&=:\mathcal{S}_{2}+\mathcal{S}'_{2}.
\end{align}
Next by substituting $p_{1}=i_{1}+1, p_{2}=i_{2}$ in the first summand $(i)$ and $p_{1}=i_{1}, p_{2}=i_{2}+1$ in the second summand $(ii)$ of $\mathcal{S}_{2}$ we obtain
\begin{align}
\nonumber&\mathcal{S}_{2}=\sum_{1\le j_{1}<j_{2}\le n}^{}\left[\,\sum_{\substack{p_{1}\ge 2,p_{2}\ge 1;\\ p_1+p_{2}=q}}\frac{(q-1)!}{(p_{1}-1)!\,p_{2}!}\,D_{f}^{j_{1}^{p_{1}},j_{2}^{p_{2}}}(t)\right]+\sum_{1\le j_{1}<j_{2}\le n}^{}\left[\,\sum_{\substack{p_{1}\ge 1,p_{2}\ge 2;\\ p_1+p_{2}=q}}\frac{(q-1)!}{p_{1}!\,(p_{2}-1)!}\,D_{f}^{j_{1}^{p_{1}},j_{2}^{p_{2}}}(t)\right]\\
\nonumber&=\sum_{1\le j_{1}<j_{2}\le n}^{}\left[\,\sum_{\substack{p_{1}\ge 2,p_{2}\ge 1;\\ p_1+p_{2}=q}}\frac{p_{1}\,(q-1)!}{p_{1}!\,p_{2}!}\,D_{f}^{j_{1}^{p_{1}},j_{2}^{p_{2}}}(t)\right]+\sum_{1\le j_{1}<j_{2}\le n}^{}\left[\,\sum_{\substack{p_{1}\ge 1,p_{2}\ge 2;\\ p_1+p_{2}=q}}\frac{p_{2}\,(q-1)!}{p_{1}!\,p_{2}!}\,D_{f}^{j_{1}^{p_{1}},j_{2}^{p_{2}}}(t)\right]\\
\label{Lem2r7}&=\sum_{1\le j_{1}<j_{2}\le n}^{}\left[\,\sum_{\substack{i_{1}\ge 2,i_{2}\ge 1; \\ i_1+i_{2}=q}}\frac{i_{1}\,(q-1)!}{i_{1}!\,i_{2}!}\,D_{f}^{j_{1}^{i_{1}},j_{2}^{i_{2}}}(t)\right]+\sum_{1\le j_{1}<j_{2}\le n}^{}\left[\,\sum_{\substack{i_{1}\ge 1,i_{2}\ge 2; \\ i_1+i_{2}=q}}\frac{i_{2}\,(q-1)!}{i_{1}!\,i_{2}!}\,D_{f}^{j_{1}^{i_{1}},j_{2}^{i_{2}}}(t)\right].
\end{align}
Now by adding the term $\mathcal{S}'_{1}$ (obtained earlier for $k=1$) and $\mathcal{S}_{2}$ gives the exact term for $k=2$ in \eqref{Lem2r1} for $m=q$, and
the additional extra term $\mathcal{S}'_{2}$ will help us subsequently to get the next exact term for $k=3$ in \eqref{Lem2r1} for $m=q$.

Continuing in this way, after considering the differentiation of the term $k=l$, for $1\le l\le q-1$ in \eqref{Lem2r2}, we have
\begin{align}
\nonumber&\frac{d}{dt}\left[\sum_{1\le j_{1}<\cdots<j_{l}\le n}^{}\left(\sum_{\substack{i_{1},\ldots,i_{l}\ge 1;\\ i_1+\cdots+i_{l}=q-1}}\left(\frac{(q-1)!}{i_{1}!\ldots i_{l}!}\,\,D_{f}^{j_{1}^{i_1},\ldots,j_{l}^{i_{l}}}(t)\right)\right)\right]\\
\nonumber&\hspace*{1cm}=\sum_{1\le j_{1}<\cdots<j_{l}\le n}^{}\left[\sum_{\substack{i_{1},\ldots,i_{l}\ge 1;\\ i_1+\cdots+i_{l}=q-1}}\left(\frac{(q-1)!}{i_{1}!\ldots i_{l}!}\,\,\sum_{r=1}^{l}D_{f}^{j_{1}^{i_1},\ldots,j_{r}^{i_{r}+1},\ldots,j_{l}^{i_{l}}}(t)\right)\right]\\
\nonumber&\hspace*{1cm}\quad+\sum_{1\le j_{1}<\cdots<j_{l+1}\le n}^{}\left[\sum_{r=1}^{l+1}\left(\sum_{\substack{i_{1},\ldots,i_{r-1},i_{r+1},\ldots,i_{l+1}\ge 1;\\ i_{r}=1;\\ i_1+\cdots+i_{r}+\cdots+i_{l+1}=q}}\left(\frac{(q-1)!}{i_{1}!\cdots i_{l+1}!}\,\,D_{f}^{j_{1}^{i_1},\ldots,j_{r}^{i_{r}},\ldots,j_{l+1}^{i_{l+1}}}(t)\right)\right)\right]\\
\label{Lem2r8}&\hspace*{1cm}=:\mathcal{S}_{l}+\mathcal{S}'_{l}.
\end{align}
Again performing similar kind of substitutions as in \eqref{Lem2r6} to $\mathcal{S}_{l}$ we obtain
\begin{align}
\label{Lem2r9}&\mathcal{S}_{l}=\sum_{1\le j_{1}<\cdots<j_{l}\le n}^{}\left[\sum_{r=1}^{l}\,\,\left(\sum_{\substack{i_{1},\ldots,i_{r-1},i_{r+1},\ldots,i_{l}\ge 1;\\ i_{r}\ge 2;\\ i_1+\cdots+i_{r}+\cdots+i_{l}=q}}\left(\frac{i_{r}(q-1)!}{i_{1}!\ldots i_{l}!}\,D_{f}^{j_{1}^{i_1},\ldots,j_{r}^{i_{r}},\ldots,j_{l}^{i_{l}}}(t)\right)\right)\right],
\end{align}
and hence $\left(\mathcal{S}'_{l-1}+\mathcal{S}_{l}\right)$ provides the exact term for $k=l~(2\le l\le q-1)$ in \eqref{Lem2r2} for $m=q$.

Now to complete the induction cycle on $m$, we need to show that after performing the differentiation of the term $k=q-1$ in \eqref{Lem2r2}, the additional extra term $\mathcal{S}'_{q-1}$ is exactly the same term for $k=q$ in \eqref{Lem2r1} for $m=q$. Indeed, the derivative of the term $k=q-1$ in \eqref{Lem2r2} gives the additional term as,
\begin{align}
\nonumber&\mathcal{S}'_{q-1}=\sum_{1\le j_{1}<\cdots<j_{q}\le n}^{}\left[(q-1)!\sum_{r=1}^{q}\,\,D_{f}^{j_{1},\ldots,j_{r},\ldots,j_{q}}(t)\right]=\sum_{1\le j_{1}<\cdots<j_{q}\le n}^{}q!\left(D_{f}^{j_{1},\ldots,j_{q}}(t)\right),
\end{align}
which is same as the term for $k=q$ in the sum \eqref{Lem2r1} corresponding to $m=q$. This completes the induction on $m$. Therefore our expected formula for the derivative $\dmds\bigg|_{s=t}f(\bm{X}_{n}(s))$ in  \eqref{Lem2r1} holds true. Finally, the existence of $\dmds\bigg|_{s=t}f(\bm{X}_{n}(s))$ in the Schatten norm and in the operator norm follows from \eqref{Lem2r10} and Lemma \ref{Lem1}. This completes the proof. 
\end{proof}

\section{Principle estimate and spectral shift measures}\label{Sec4}
Our next theorem establishes the estimate of the trace of higher-order derivatives corresponding to multivariate operator function.
\begin{thm}\label{Thm1}
Assume Notations \ref{Notations} and \eqref{Lem2r10}. Let $m\ge 2$ and $V_{j}\in\mathcal{L}_{2}(\Mcal,\tr),$ for $1\le j\le n$. Assume that either $\tr$ is the standard trace or $m=2$. Suppose there exists $t\in[0, 1]$ such that $\bm{X}_{n}(t)\,\in\,\mathcal{C}_{n}$ satisfies \eqref{Dilation}. Then, for every $f\,\in\mathcal{A}(\D),\,\epsilon>0$,
\begin{equation}\label{Thm1r1}
\left|\tr\left(D_{f}^{j_{1}^{i_{1}},\ldots,j_{k}^{i_{k}}}(t)\right)\right|\le\left(\prod_{l=1}^{k}(\|V_{j_{l}}\|_{2}^{i_{l}})\right)\left\|\frac{\partial^{m}f}{\partial z_{j_{1}}^{i_{1}}\cdots\partial z_{j_{k}}^{i_{k}}}\right\|_{L^{\infty}(\Omega^{n})},
\end{equation}
where $1\le k\le m,~1\le j_1<\cdots< j_k\le n$ along with $1\le i_{1},\ldots,i_{k}\le m$ such that $\sum_{l=1}^{k} i_{l}=m$ and \,\,$\Omega\,=\,\cir$. If, in addition, $\bm{X}_{n}(t)$ is a tuple of self-adjoint contractions, then \eqref{Thm1r1} holds with $\Omega\,=\,[-1, 1],$ for every $f\in\mathcal{A}(\Ra),\,\epsilon>0$.
\end{thm}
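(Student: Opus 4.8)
The plan is to prove \eqref{Thm1r1} first for the Taylor polynomials $f_N$ of \eqref{Lem6r1} and then let $N\to\infty$. This reduction is routine: by \eqref{Lem2r3} each summand of $D_{f_N}^{j_1^{i_1},\ldots,j_k^{i_k}}(t)$ is a product of at least two Hilbert--Schmidt factors (since $m\ge 2$) and hence trace class, and the bound \eqref{Lem2r4} together with the absolute convergence of \eqref{srf} and of all its partial derivatives on $\overline{\mathbb D}^{\,n}$ shows that $\tr(D_{f_N}^{\ldots}(t))\to\tr(D_f^{\ldots}(t))$ while the right-hand side of \eqref{Thm1r1} converges as well. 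So I fix a polynomial $f_N$ and work with the explicit expression \eqref{Lem2r3}.

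First I would invoke the hypothesis that $\bm{X}_n(t)$ satisfies \eqref{Dilation}: let $\bm{U}_n=(U_1,\ldots,U_n)$ be the commuting normal contractive dilation on $\mathcal K\supset\hil$ with joint spectral measure $E$ supported on $\sigma(\bm{U}_n)\subseteq\overline{\mathbb D}^{\,n}$, and put $\hat V_{j}:=P_{\hil}V_{j}P_{\hil}$, so that $\|\hat V_j\|_2\le\|V_j\|_2$. Each $g_{j_1^{i_1},\ldots,j_k^{i_k}}(t)$ is, under the trace, a cyclic product of the $m$ factors $V_{j_l}$ (with $V_{j_l}$ occurring $i_l$ times) separated by monomials in the commuting contractions $X_s(t)$. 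Rewriting every such monomial as a compression of the corresponding monomial in the $U_s$ via \eqref{Dilation} and inserting the resolution $\int dE$ in each of the $m$ gaps turns $\tr(D_{f_N}^{\ldots}(t))$ into an $m$-fold spectral integral. A coordinate $s\notin\{j_1,\ldots,j_k\}$ is never split (there is no factor $V_s$) and contributes a single power $z_s^{k_s}$, whereas coordinate $j_l$ is split into $i_l+1$ pieces $p_{l,0},\ldots,p_{l,i_l}$ by the $i_l$ insertions of $V_{j_l}$. By Lemma \ref{Lem6}(ii) the constrained sums $\prod_l\sum_{p_{l,0}+\cdots+p_{l,i_l}=k_{j_l}-i_l}\prod_s\lambda_{l,s}^{p_{l,s}}$ are precisely the mixed divided difference, so summing against $c_{k_1,\ldots,k_n}$ identifies the integrand as $\big(\prod_{l=1}^{k}D_{j_l^{i_l}}^{\lambda_{l,0},\ldots,\lambda_{l,i_l}}\big)f_N$ evaluated at the spectral points, while the prefactor $\prod_l i_l!$ from \eqref{Lem2r3} survives.

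This gives $\tr\big(D_{f_N}^{\ldots}(t)\big)=\big(\prod_l i_l!\big)\int\cdots\int\big(\prod_l D_{j_l^{i_l}}f_N\big)\,d\nu$, where $\nu$ is the complex spectral trace measure determined on products of Borel sets by $\nu(\cdots)=\tr\big(E(\cdot)\hat V_{(1)}E(\cdot)\cdots\hat V_{(m)}E(\cdot)\big)$ read cyclically, the multiset $\{\hat V_{(i)}\}$ being $\{V_{j_l}\}$ with multiplicity $i_l$. The divided difference is controlled by \eqref{Lem5r2}, whose modulus is at most $\frac{1}{i_1!\cdots i_k!}\big\|\partial^m f_N/\partial z_{j_1}^{i_1}\cdots\partial z_{j_k}^{i_k}\big\|_{\infty}$; since this partial derivative is analytic, the maximum modulus principle replaces the supremum over $\overline{\mathbb D}^{\,n}$ by $\|\cdot\|_{L^\infty(\cir^n)}$, and the factor $\prod_l i_l!$ cancels against $1/(i_1!\cdots i_k!)$. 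Everything thus reduces to the total-variation bound $\|\nu\|\le\prod_{l}\|V_{j_l}\|_2^{i_l}$.

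The hard part is exactly this estimate, and it is where the dichotomy ``$\tr$ standard or $m=2$'' enters. Discretizing each spectral factor along a finite Borel partition $\{B_\sigma\}$ of $\sigma(\bm{U}_n)$, a generic contribution to $\|\nu\|$ is $\big|\tr(W_1\cdots W_m)\big|$ with $W_i=E(B_{\sigma_{i-1}})\hat V_{(i)}E(B_{\sigma_i})$ (cyclically, $\sigma_m=\sigma_0$). For the standard trace the Schatten classes are nested, so $W_i\in\mathcal B_2\subseteq\mathcal B_m$ and Hölder gives $|\tr(W_1\cdots W_m)|\le\prod_i\|W_i\|_m\le\prod_i\|W_i\|_2$; summing over $\vec\sigma$ and applying the Schatten-$m$ Hölder inequality once more to the finite nonnegative cell-matrices $(\mathcal W_i)_{\sigma\sigma'}=\|E(B_\sigma)\hat V_{(i)}E(B_{\sigma'})\|_2$, together with the Pythagorean identity $\sum_{\sigma,\sigma'}\|E(B_\sigma)\hat V_{(i)}E(B_{\sigma'})\|_2^2=\|\hat V_{(i)}\|_2^2$, yields $\sum_{\vec\sigma}\prod_i\|W_i\|_2=\Tr(\mathcal W_1\cdots\mathcal W_m)\le\prod_i\|\mathcal W_i\|_2\le\prod_i\|\hat V_{(i)}\|_2\le\prod_l\|V_{j_l}\|_2^{i_l}$; refining the partition gives $\|\nu\|\le\prod_l\|V_{j_l}\|_2^{i_l}$. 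When $m=2$ only the Hilbert--Schmidt Hölder inequality $\|AB\|_1\le\|A\|_2\|B\|_2$ is used, which holds in any semifinite von Neumann algebra, so no restriction on $\tr$ is needed; for $m>2$ the nesting $\mathcal B_2\subseteq\mathcal B_m$ fails for a general semifinite trace, which is precisely why the standard trace is assumed. Combining the symbol bound with $\|\nu\|\le\prod_l\|V_{j_l}\|_2^{i_l}$ proves \eqref{Thm1r1} for $f_N$ with $\Omega=\cir$, and letting $N\to\infty$ finishes the analytic case. For the self-adjoint case one dilates instead to commuting self-adjoint contractions, so that $E$ is supported on $[-1,1]^n$, and repeats the argument verbatim with $\Omega=[-1,1]$ and $f\in\mathcal A(\Ra)$.
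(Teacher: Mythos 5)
Your overall strategy coincides with the paper's: reduce to the Taylor polynomials $f_N$, pass to the normal (dilated) tuple, discretize the spectral measures, recognize the resulting constrained sums as mixed divided differences via Lemma \ref{Lem6}, bound those by Lemma \ref{Lem5} together with the maximum modulus principle, and reduce everything to a total-variation bound for traces of products of spectral projections and perturbations. The one genuine difference is that you re-prove this last estimate from scratch (the cell-matrix/H\"older argument), whereas the paper simply quotes it as Lemma \ref{Lem3} from \cite{DySk09} and \cite{BSIEOT96}. Your derivation --- the per-term H\"older bound $|\Tr(W_1\cdots W_m)|\le\prod_i\|W_i\|_m\le\prod_i\|W_i\|_2$, followed by $\sum_{\vec\sigma}\prod_i\|W_i\|_2=\Tr(\mathcal W_1\cdots\mathcal W_m)\le\prod_i\|\mathcal W_i\|_2$ and the Pythagorean identity --- is correct for the standard trace, and it has the merit of making transparent exactly where the dichotomy ``standard trace or $m=2$'' in the hypothesis comes from. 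You also fold the dilation into the argument from the start (compressing once via \eqref{Dilation} and inserting the joint spectral measure of $\bm U_n$ in every gap), where the paper first treats the case of a normal tuple and then reduces the dilated case to it; this is an equivalent, slightly cleaner organization.

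Two points need repair. First, your treatment of $m=2$ for a general semifinite trace has a gap: after dilating, the spectral projections $E(B_\sigma)$ of $\bm U_n$ act on $\mK$, not on $\hil$, and even their compressions $P_{\hil}E(B_\sigma)P_{\hil}$ need not belong to $\Mcal$; hence the discretized quantities $\tr\big(E(B_{\sigma_0})\hat V_1E(B_{\sigma_1})\hat V_2\big)$ are simply not defined for a general $\tr$, which lives only on $\Mcal\subseteq\mathcal B(\hil)$ and does not extend to $\mathcal B(\mK)$. Your Cauchy--Schwarz argument is fine when the tuple $\bm X_n(t)$ is itself normal (then the projections do lie in $\Mcal$), but it does not pass through the dilation; the paper avoids this issue entirely by citing \cite[Theorem 4.6]{Sk15} for $m=2$ and running its own argument only for the standard trace with $m\ge3$. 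Second, a smaller issue: when all $m$ derivatives fall on a single coordinate ($k=1$), cyclicity of the trace merges the exponents $p_0$ and $p_m$ onto one spectral variable, so the relevant divided difference has a repeated node; this is precisely why the paper proves Lemma \ref{Lem6}(i) separately and devotes a separate subcase to $k=1$. Your identification via Lemma \ref{Lem6}(ii), which assigns distinct spectral variables to all $i_l+1$ pieces, is literally correct only for $k\ge2$; since the bound of Lemma \ref{Lem5} covers repeated nodes, this is presentational rather than substantive. (Also, in the self-adjoint case no dilation is needed at all: commuting self-adjoint contractions are already normal, with joint spectrum in $[-1,1]^n$.)
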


The basic idea of the proof of the above Theorem \ref{Thm1} is based on a tricky adaptation of the approach considered in the proof of \cite[Theorem 4.6]{Sk15}. In the expression of higher-order derivatives, the increasing number of perturbation operators $V_{j}$ creates major difficulties in getting our desired estimate \eqref{Thm1r1}. This is not the case for lower-order derivatives; in fact, easier for $m=1$ and $2$ (see proof of \cite[Theorems 3.6, 4.6]{Sk15}). Perturbation operators appear at most twice in the expression of the lower-order derivatives ($m=1,2$), and due to the cyclicity property of the trace, for $m=1$, one may ignore the appearance of $V_j$. And for $m=2$, simple applications of Cauchy-Schwartz inequality provide the required estimate. In contrast, higher-order derivative expressions contain $m (\geq 3)$ number of perturbation operators. In that case, the cyclicity property of the trace and a direct application of  the Cauchy-Schwartz inequality will not help us to obtain the required estimate. As a result, we propose the following powerful tool to solve our problem,  which is outlined below.
\begin{lma}\normalfont{\cite[Theorem 4.1]{DySk09}\cite[Equation (17)]{BSIEOT96}}\label{Lem3}
Assume $N_j, i_j,K_j\in\N$, $m\geq 2$, and $V_{j}\in\mLL(\Mcal,\tr),$ for $1\le j\le m$. Let $(\delta_{i_j})_{1\le i_{j}\le K_j}$ be a partition of $\C^{N_j}$ and let $E_j(\cdot)$ be the spectral measure on $\C^{N_j}$ for $1\le j\le m$. Assume that either $m=2$ or $\tr$ is the standard trace. Then the following estimate holds
$$\sum_{i_1=1}^{K_1}\sum_{i_2=1}^{K_2}\cdots\sum_{i_m=1}^{K_m}\left|\tr\left(\prod_{j=1}^{m}\left(E_j(\delta_{i_j})V_j\right)\right)\right|\le\prod_{j=1}^{m}\left(\|V_{j}\|_{2}\right).$$
\end{lma}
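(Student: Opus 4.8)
The plan is to reduce the estimate, via the cyclic structure of the trace, to a pointwise bound on each summand that is then summed by a second Hölder inequality at the level of finite scalar matrices; the places where Schatten-norm monotonicity $\|\cdot\|_m\le\|\cdot\|_2$ is invoked are exactly what separate the general case $m\ge 3$ (needing the standard trace) from $m=2$ (needing nothing). Throughout I write $P_j^{(a)}=E_j(\delta_a)$, so that for each $j$ the family $\{P_j^{(a)}\}_{a=1}^{K_j}$ consists of mutually orthogonal projections with $\sum_a P_j^{(a)}=I$, and I adopt the cyclic convention $E_{m+1}:=E_1$, $K_{m+1}:=K_1$.

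First I would introduce the \emph{block operators}
$$W_j^{(a,b)}:=E_j(\delta_a)\,V_j\,E_{j+1}(\delta_b),\qquad 1\le a\le K_j,\ 1\le b\le K_{j+1}.$$
Inserting $P_j^{(i_j)}=\big(P_j^{(i_j)}\big)^2$ and using cyclicity of $\tr$, each summand factors as a genuine cyclic product,
$$\tr\!\left(\prod_{j=1}^m E_j(\delta_{i_j})V_j\right)=\tr\!\left(W_1^{(i_1,i_2)}W_2^{(i_2,i_3)}\cdots W_m^{(i_m,i_1)}\right).$$
Under the standing hypothesis (standard trace, or $m=2$) the noncommutative Schatten--Hölder inequality with all exponents equal to $m$ then yields the pointwise bound
$$\left|\tr\!\left(\prod_{j=1}^m E_j(\delta_{i_j})V_j\right)\right|\le\prod_{j=1}^m\big\|W_j^{(i_j,i_{j+1})}\big\|_{m}.$$

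Next I would collect these Schatten-$m$ norms into nonnegative scalar matrices $\mathsf W_j=\big(\|W_j^{(a,b)}\|_m\big)_{a,b}$ of size $K_j\times K_{j+1}$. Summing the previous display over all indices recognizes the left-hand side of the lemma as a matrix trace,
$$\sum_{i_1=1}^{K_1}\cdots\sum_{i_m=1}^{K_m}\left|\tr\!\left(\prod_{j=1}^m E_j(\delta_{i_j})V_j\right)\right|\le\sum_{i_1,\dots,i_m}\prod_{j=1}^m(\mathsf W_j)_{i_j,i_{j+1}}=\Tr\!\big(\mathsf W_1\mathsf W_2\cdots\mathsf W_m\big).$$
A second application of Schatten--Hölder, now to these finite matrices and again with all exponents $m$, bounds the right-hand side by $\prod_j\|\mathsf W_j\|_{S^m}$, and the monotonicity $\|\cdot\|_{S^m}\le\|\cdot\|_{S^2}$ (recall $m\ge 2$) replaces each factor by the Frobenius norm $\|\mathsf W_j\|_{S^2}=\big(\sum_{a,b}\|W_j^{(a,b)}\|_m^2\big)^{1/2}$. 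Finally, applying the operator-level monotonicity $\|W_j^{(a,b)}\|_m\le\|W_j^{(a,b)}\|_2$ entrywise and telescoping through $\sum_b E_{j+1}(\delta_b)=I$ and $\sum_a E_j(\delta_a)=I$ gives $\sum_{a,b}\|W_j^{(a,b)}\|_2^2=\sum_{a,b}\tr\big(E_{j+1}(\delta_b)V_j^*E_j(\delta_a)V_j\big)=\tr(V_j^*V_j)=\|V_j\|_2^2$, so the whole chain collapses to $\prod_{j=1}^m\|V_j\|_2$, as required.

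The step I expect to be decisive, and the one that explains the hypothesis, is the entrywise operator monotonicity $\|W_j^{(a,b)}\|_m\le\|W_j^{(a,b)}\|_2$ used in the last telescoping: this ordering of Schatten norms (largest for the smallest index) is the $\ell^p$-decreasing behavior characteristic of the \emph{standard} trace on a type I factor, and it reverses for a general semifinite---e.g.\ finite---trace, where the bound would then fail. (The companion monotonicity $\|\mathsf W_j\|_{S^m}\le\|\mathsf W_j\|_{S^2}$ for the finite matrices is automatic, the matrix trace being of counting type.) When $m=2$ this monotonicity is vacuous, since $\|\cdot\|_m=\|\cdot\|_2$, and the remaining ingredients---Cauchy--Schwarz (Schatten--Hölder with exponent $2$), the matrix-trace reduction, and the telescoping identity---use only cyclicity, orthogonality of the spectral projections, and faithfulness of $\tr$; this is precisely why the case $m=2$ is valid for an arbitrary semifinite $(\Mcal,\tr)$. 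The only mild technical point to verify is that all intermediate operators are Hilbert--Schmidt, hence in every Schatten class $\mathcal{L}_{m}$ with $m\ge 2$, which is immediate from $V_j\in\mLL(\Mcal,\tr)$ and the boundedness of the spectral projections.
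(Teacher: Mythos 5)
Your argument is correct, and the comparison here is necessarily with the cited sources rather than with an in-paper argument: the paper states Lemma \ref{Lem3} without proof, importing the case $m=2$ for a general semifinite trace from \cite[Theorem 4.1]{DySk09} and the case of the standard trace for all $m\ge 2$ from \cite[Equation (17)]{BSIEOT96}. Those proofs run through the Hilbert-space geometry of $L_2(\Mcal,\tr)$: for $m=2$ one views $X\mapsto E_1(\delta_a)XE_2(\delta_b)$ as a family of mutually orthogonal projections on $L_2(\Mcal,\tr)$ and applies Cauchy--Schwarz twice, while the standard-trace case for higher $m$ is obtained in the course of the Birman--Solomyak analysis showing that tensor products of spectral measures are again spectral measures, the multiple sum being controlled as the total variation of a projection-valued set function paired with suitable vectors. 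Your route is genuinely different and more elementary: the reduction to cyclic blocks $W_j^{(a,b)}=E_j(\delta_a)V_jE_{j+1}(\delta_b)$ (legitimate, since a product of $m\ge 2$ elements of $\mathcal{L}_{2}(\Mcal,\tr)$ is trace class, so cyclicity applies), the pointwise H\"older bound with all exponents equal to $m$ (note $W_j^{(a,b)}\in L_2\cap\Mcal\subseteq L_m$ in any semifinite algebra, so this step in fact needs no hypothesis on $\tr$, contrary to your phrasing there), the passage to the finite nonnegative matrices $\mathsf W_j$ with a second H\"older at the matrix level, and the telescoping $\sum_{a,b}\|W_j^{(a,b)}\|_2^2=\tr(V_j^*V_j)$ are all sound, and you correctly order the steps so that entrywise domination is only used at the Frobenius stage, where it is valid. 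You also isolate precisely the one point where the hypothesis enters, namely the monotonicity $\|W_j^{(a,b)}\|_m\le\|W_j^{(a,b)}\|_2$, which holds for the standard trace and is vacuous for $m=2$; your concluding suggestion that its failure for general $\tr$ makes the lemma itself fail for $m>2$ is only a heuristic (the failure of one step in one proof proves nothing), though the conclusion happens to be true by the counterexample of \cite[Remark 4.2]{DySk09}, as the paper notes in its own remark. What your double-H\"older argument buys is a short, self-contained proof avoiding the tensor-product spectral measure machinery; what the cited approach buys is the structural statement underlying multiple operator integrals, which is what \cite{BSIEOT96} is really after.
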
	

\begin{rmrk}
Note that, Lemma \ref{Lem3} holds only for $m\ge 2$. However, for $m=1,$ a special case has already been discussed in \cite[Lemma 3.5]{Sk15} by considering $V_{j}\in\mL(\Mcal,\tr),$ for $1\le j\le m.$ For the general trace $\tr,$ it is worth noting that Lemma \ref{Lem3} is no more valid for $m>2$ (See \cite[Remark 4.2]{DySk09}). 
\end{rmrk}

\begin{proof}[Proof of Theorem \ref{Thm1}]
The proof for $m=2$ is established in \cite[Theorem 4.6]{Sk15}. Assume $\tr$ is the standard trace and $m\geq 3$. We give the proof only for $\Omega\,=\,\cir$; the proof for $\Omega\,=\,[-1, 1]$ is analogous. Let $N\in\N$, and
$$f_{N}(z_1,\ldots,z_n)=\sum_{0\le k_1,\ldots,k_n\le N}^{}c_{k_1,\ldots,k_n}z_{1}^{k_{1}}\cdots z_{n}^{k_{n}}.$$
From \eqref{Lem2r10} we obtain
$$\left\|D_{f_{N}}^{j_{1}^{i_{1}},\ldots,j_{k}^{i_{k}}}(t)\right\|_{1}\le\left(\prod_{l=1}^{k}\|V_{j_{l}}\|_{2}^{i_{l}}\right)\sum_{0\le k_1,\ldots,k_n\le N}^{}\mathscr{K}_{k_{1},\ldots,k_{n}}\left|c_{k_1,\ldots,k_n}\right|,$$
and
$$\left\|D_{f}^{j_{1}^{i_{1}},\ldots,j_{k}^{i_{k}}}(t)\right\|_{1}\le\left(\prod_{l=1}^{k}\|V_{j_{l}}\|_{2}^{i_{l}}\right)\sum_{k_1,\ldots,k_n\ge 0}^{}\mathscr{K}_{k_{1},\ldots,k_{n}}\left|c_{k_1,\ldots,k_n}\right|,$$
where $\mathscr{K}_{k_{1},\ldots,k_{n}}$ is the co-efficient of \,$\frac{\partial^{m}}{\partial z_{j_{1}}^{i_{1}}\cdots \partial z_{j_{k}}^{i_{k}}}\left(z_{1}^{k_{1}}\ldots z_{n}^{k_{n}}\right)$.
As a result we have
$$\tr\left(D_{f}^{j_{1}^{i_{1}},\ldots,j_{k}^{i_{k}}}(t)\right)=\lim_{N\to\infty}\tr\left(D_{f_{N}}^{j_{1}^{i_{1}},\ldots,j_{k}^{i_{k}}}(t)\right).$$
Therefore it is sufficient to prove \eqref{Thm1r1} for the function $f_{N}$. Now \eqref{Lem1r2} and the pairwise commutativity of $X_{1}(t),\ldots,X_{n}(t)$ ensures that
\begin{align}
\nonumber&\tr\left(D_{f_{N}}^{j_{1}^{i_{1}},\ldots,j_{k}^{i_{k}}}(t)\right)\\
\nonumber&=\sum_{0\le k_1,\ldots,k_n\le N}^{}c_{k_1,\ldots,k_n}\,\Bigg[\Bigg(\prod_{l=1}^{k}i_{l}!\sum_{\substack {p_{l,0},\ldots,p_{l,i_{l}}\ge 0;\\p_{l,0}+\cdots+ p_{l,i_{l}}=k_{j_{l}}-i_{l}}}\Bigg)\tr\bigg\{T_{k_1,\ldots,k_{j_{1}-1}}(\bm{X}_{j_{1}-1}(t))\\ \nonumber&\hspace{1.4in}\times\prod_{q=1}^{k-1}\left(\prod_{l=0}^{i_{q}-1}\left((X_{j_{q}}(t))^{p_{q,l}}V_{j_{q}}\right)T_{p_{q,i_{q}},k_{j_{q}+1},\ldots,k_{j_{q+1}-1}}({}_ {j_{q}}\bm{X}_{j_{q+1}-1}(t))\right)\\
\nonumber&\hspace{1.4in}\times\left(\prod_{l=0}^{i_{k}-1}\left((X_{j_{k}}(t))^{p_{k,l}}V_{j_{k}}\right)\right)T_{p_{k,i_{k}},k_{j_{k}+1},\ldots,k_{n}}({}_ {j_{k}}\bm{X}_{n}(t))\bigg\}\Bigg]\\
\nonumber&=\sum_{0\le k_1,\ldots,k_n\le N}^{}c_{k_1,\ldots,k_n}\,\Bigg[\Bigg(\prod_{l=1}^{k}i_{l}!\sum_{\substack {p_{l,0},\ldots,p_{l,i_{l}}\ge 0;\\p_{l,0}+\cdots+ p_{l,i_{l}}=k_{j_{l}}-i_{l}}}\Bigg)\tr\bigg\{T_{k_1,\ldots,k_{j_{1}-1},p_{1,0}}(\bm{X}_{j_{1}}(t))\\
\nonumber&\hspace*{1.4in}\times T_{p_{k,i_{k}},k_{j_{k}+1},\ldots,k_{n}}({}_{j_{k}}\bm{X}_{n}(t))V_{j_{1}}\left(\prod_{l=1}^{i_{1}-1}\left((X_{j_{1}}(t))^{p_{1,l}}V_{j_{1}}\right)\right)\\
\label{Thm1r2}&\quad\times \prod_{q=1}^{k-1}\left(T_{p_{q,i_{q}},k_{j_{q}+1},\ldots,k_{j_{q+1}-1},p_{q+1,0}}({}_{j_{q}}\bm{X}_{j_{q+1}}(t))V_{j_{q+1}}\prod_{l=1}^{i_{q+1}-1}\left((X_{j_{q+1}}(t))^{p_{q+1,l}}V_{j_{q+1}}\right)\right)\bigg\}\Bigg],
\end{align}
where $2\le k\le m,~1\le j_{1}<\cdots<j_{k}\le n$ along with $1\le i_{1},\ldots,i_{k}\le m$ such that $\sum_{l=1}^{k}i_{l}=m$ and
\begin{align}
\nonumber\tr\left(D_{f_{N}}^{j^{m}}(t)\right)&=m!\sum_{0\le k_1,\ldots,k_n\le N}^{}c_{k_1,\ldots,k_n}\Bigg[\sum_{\substack {p_{0},\ldots,p_{m}\ge 0;\\ p_{0}+\cdots+p_{m}=k_{j}-m}}\\
\nonumber&\hspace*{0.5in}\tr\left\{T_{k_1,\ldots,k_{j-1},p_{0}}(\bm{X}_{j}(t))\,V_j\left(\prod_{l=1}^{m-1}\left((X_{j}(t))^{p_{l}}\,V_j\right)\right)T_{p_m,k_{j+1},\ldots,k_{n}}({}_{j}\bm{X}_{n}(t))\right\}\Bigg]\\
\nonumber&=m!\sum_{0\le k_1,\ldots,k_n\le N}^{}c_{k_1,\ldots,k_n}\Bigg[\sum_{\substack {p_{0},\ldots,p_{m}\ge 0;\\ p_{0}+\cdots+p_{m}=k_{j}-m}}\\
\label{Thm1r3}&\hspace*{0.5in}\tr\left\{T_{k_1,\ldots,k_{j-1}, p_{0}+p_{m}, k_{j+1},\ldots,k_{n}}(\bm{X}_{n}(t))\,V_{j}\left(\prod_{l=1}^{m-1}\left((X_{j}(t))^{p_{l}}\,V_j\right)\right)\right\}\Bigg].
\end{align}
\begin{case}\label{Thm1case1}
{\normalfont Assume that $\bm{X}_{n}(t)$ consists of commuting normal contractions.
\begin{subcase}
{\normalfont Consider the case when the differentiation occurs only at one variable, that is, $i_{l}=m$ and $i_{k}=0$ for all $k\neq l.$ Without loss of generality we assume $i_{1}=m$ and $j_{1}=j.$	
	
Let $E_t(\cdot)$ be the joint spectral measure of the tuple $(X_{1}(t),\ldots,X_{n}(t))$ and let $E_{t,j}(\cdot)$ be the spectral measure of $X_{j}(t)$. Then the spectral theorem ensures that
\begin{align}
\nonumber&X_{j}(t)^{p}=\int_{\C}z^{p}dE_{t,j}(z),
\end{align}
and
\begin{align}
\nonumber&T_{k_1,\ldots,k_{j-1}, p_{0}+p_{m}, k_{j+1},\ldots,k_{n}}(\bm{X}_{n}(t))=\int_{\C^{n}}z_{1}^{k_1}\cdots z_{j-1}^{k_{j-1}}z_{j}^{p_{0}+p_{m}}z_{j+1}^{k_{j+1}}\cdots z_{n}^{k_n}dE_{t}(z_1,\ldots,z_n).
\end{align}
Hence, for every $1\le l\le n$ and $1\le i\le m-1$, we have the existence of sequences of Borel partitions $(\delta_{s,l,\beta_{l}})_{1\le\beta_{l}\le s}$ and $({\widetilde{\delta}_{s,i,\alpha_{i}}})_{1\le\alpha_{i}\le s}$ of $\C$, and sequences of complex numbers $(z_{s,l,\beta_{l}})_{1\le l\le n,\,1\le\beta_{l}\le s}$ and  $({\widetilde{z}_{s,i,\alpha_{i}}})_{1\le\alpha_{i}\le s}$ respectively, such that
\begin{align}
\nonumber&\tr\left[T_{k_1,\ldots,k_{j-1}, p_{0}+p_{m}, k_{j+1},\ldots,k_{n}}(\bm{X}_{n}(t))\,V_{j}\left(\prod_{l=1}^{m-1}\left((X_{j}(t))^{p_{l}}\,V_j\right)\right)\right]\\
\nonumber&=\lim_{s\to\infty}\sum_{1\,\le\beta_{1},\ldots,\beta_{n},\alpha_{1},\ldots,\alpha_{m-1}\le s}\left(z_{s,1,\beta_{1}}^{k_{1}}\cdots z_{s,j-1,\beta_{j-1}}^{k_{j-1}}z_{s,j,\beta_{j}}^{p_{0}+p_{m}}\,z_{s,j+1,\beta_{j+1}}^{k_{j+1}}\cdots z_{s,n,\beta_{n}}^{k_{n}}\,{\widetilde{z}_{s,1,\alpha_{1}}}^{p_{1}}\cdots\,{\widetilde{z}_{s,m-1,\alpha_{m-1}}}^{p_{m-1}}\right)\\
\label{Thm1r4}&\hspace{1.7in}\times\tr\left[E_{t}(\delta_{s,1,\beta_{1}}\times\cdots\times\delta_{s,n,\beta_{n}})V_{j}\left(\prod_{l=1}^{m-1}\left(E_{t,j}({\widetilde{\delta}_{s,l,\alpha_{l}}})\,V_j\right)\right)\right].
\end{align}
Now using \eqref{Thm1r4} and \eqref{Lem6r2}, we derive
\begin{align}
\nonumber\tr\left(D_{f_{N}}^{j^{m}}(t)\right)&=m!\lim_{s\to\infty}\sum_{1\,\le\beta_{1},\ldots,\beta_{n},\alpha_{1},\ldots,\alpha_{m-1}\le s}\\
\nonumber&\quad f_{N}\bigg(z_{s,1,\beta_{1}},\ldots,z_{s,j-1,\beta_{j-1}},\left[{\widetilde{z}_{s,1,\alpha_{1}}},\ldots,{\widetilde{z}_{s,m-1,\alpha_{m-1}}},z_{s,j,\beta_{j}},z_{s,j,\beta_{j}}\right],z_{s,j+1,\beta_{j+1}},\ldots,z_{s,n,\beta_{n}}\bigg)\\
\label{Thm1r5}&\hspace{0.8in} \times\tr\left[E_{t}(\delta_{s,1,\beta_{1}}\times\cdots\times\delta_{s,n,\beta_{n}})V_{j}\left(\prod_{l=1}^{m-1}\left(E_{t,j}({\widetilde{\delta}_{s,l,\alpha_{l}}})\,V_j\right)\right)\right].
\end{align}
Therefore the application of Lemmas \ref{Lem5} and \ref{Lem3} to \eqref{Thm1r5} gives 
$$\left|\tr\left(D_{f}^{j^{m}}(t)\right)\right|\le(\|V_{j}\|_{2})^{m}\left\|\frac{\partial^{m}f}{\partial z_{j}^{m}}\right\|_{L^{\infty}(\cir^{n})}.$$}
\end{subcase}
\begin{subcase}\label{Thm1r6}
{\normalfont Consider the case when the differentiation occurs atleast at two variables.

Suppose that $j_{1}<j_{2}<\cdots<j_{k}$ and denote by $E_{t,j_{h},j_{h}+1,\ldots,j_{h+1}}(\cdot)$ the spectral measure defined as follows
$$E_{t,j_{h},j_{h}+1,\ldots,j_{h+1}}(S_{j_{h}},\ldots,S_{j_{h+1}})=E_{t}(\C\times\cdots\times\C\times S_{j_{h}}\times\cdots\times S_{j_{h+1}}\times\C\times\cdots\times\C),$$
where $S_{j_{h}}$ is a Borel subset of $\C$ for $1\le h\le k-1$, and $E_{t,j_{l}}(\cdot)$ is the spectral measure of $X_{j_{l}}(t)$ for $1\le l\le k$. Then for $1\le h\le k-1$, we have the following
\begin{align}
\nonumber&T_{p_{h,i_{h}},k_{j_{h}+1},\ldots,k_{j_{h+1}-1},p_{h+1,0}}({}_{j_{h}}\bm{X}_{j_{h+1}}(t))\\
\nonumber&=\int_{\C^{j_{h+1}+1-j_{h}}}^{}z_{j_{h}}^{p_{h,i_{h}}}z_{j_{h}+1}^{k_{j_{h}+1}}\cdots z_{j_{h+1}-1}^{k_{j_{h+1}-1}}z_{j_{h+1}}^{p_{h+1,0}}dE_{t,j_{h},j_{h}+1,\ldots,j_{h+1}}(z_{j_{h}},\ldots,z_{j_{h+1}}).
\end{align}
Consider $1\le l\le k$ and $1\leq h\leq k-1$. Then for every $1\le a\le i_{l}-1$ and $j_{h}\le\eta\le j_{h+1}$, we have the existence of  sequences of Borel partitions $({\widetilde{\delta}_{i_{l},s,a,\gamma_{l,a}}})_{1\le\gamma_{l,a}\le s}$  and $(\delta_{h,s,\eta,\alpha_{h,\eta}})_{1\le\alpha_{h,\eta}\le s}$ of $\C$, and  sequences of complex numbers $({\widetilde{z}_{i_{l},s,a,\gamma_{l,a}}})_{1\le\gamma_{l,a}\le s}$ and $(z_{h,s,\eta,\alpha_{h,\eta}})_{j_{h}\le\eta\le j_{h+1},\,1\le\alpha_{h,\eta}\le s}$ respectively, such that
\begin{align}
\nonumber&\tr\Bigg[T_{k_1,\ldots,k_{j_{1}-1},p_{1,0}}(\bm{X}_{j_{1}}(t))T_{p_{k,i_{k}},k_{j_{k}+1},\ldots,k_{n}}({}_{j_{k}}\bm{X}_{n}(t))V_{j_{1}}\left(\prod_{l=1}^{i_{1}-1}\left((X_{j_{1}}(t))^{p_{1,l}}V_{j_{1}}\right)\right)\\
\nonumber&\hspace{0.7in}\times\prod_{q=1}^{k-1}\left(T_{p_{q,i_{q}},k_{j_{q}+1},\ldots,k_{j_{q+1}-1},p_{q+1,0}}({}_{j_{q}}\bm{X}_{j_{q+1}}(t))V_{j_{q+1}}\prod_{l=1}^{i_{q+1}-1}\left((X_{j_{q+1}}(t))^{p_{q+1,l}}V_{j_{q+1}}\right)\right)\Bigg]\\
\nonumber&=\lim_{s\to\infty}\sum_{\substack {1\,\le\beta_{1},\ldots,\beta_{n}\le s;\\\alpha_{h,j_{h}},\alpha_{h,j_{h}+1},\ldots,\alpha_{h,j_{h+1}}\,\le\,s,~1\le h\le k-1;\\ 1\le\gamma_{l,1},\gamma_{l,2},\ldots,\gamma_{l,i_{l}-1}\le s,~1\le l\le k}}\\
\nonumber&\Bigg[\left(z_{s,1,\beta_{1}}^{k_{1}}\cdots z_{s,j_{1}-1,\beta_{j_{1}-1}}^{k_{j_{1}-1}}z_{s,j_{1},\beta_{j_{1}}}^{p_{1,0}}z_{s,j_{k},\beta_{j_{k}}}^{p_{k,i_{k}}}\cdots z_{s,n,\beta_{n}}^{k_{n}}\right)\left(\prod_{l=1}^{i_{1}-1}\left(\widetilde{z}_{i_{1},s,l,\gamma_{1,l}}\right)^{p_{1,l}}\right)\\
\nonumber&\hspace{0.7in}\times\prod_{q=1}^{k-1}\left\{z_{q,s,j_{q},\alpha_{q,j_{q}}}^{p_{q,i_{q}}}\hspace*{-0.1cm}\left(\prod_{r=1}^{j_{q+1}-j_{q}-1}z_{q,s,j_{q}+r,\alpha_{q,j_{q}+r}}^{k_{j_{q}+r}}\right)\hspace*{-0.1cm}z_{q,s,j_{q+1},\alpha_{q,j_{q+1}}}^{p_{q+1,0}}\hspace*{-0.2cm}\prod_{l=1}^{i_{q+1}-1}\left(\widetilde{z}_{i_{q+1},s,l,\gamma_{q+1,l}}\right)^{p_{q+1,l}}\right\}\Bigg]\\
\nonumber&\times\tr\Bigg[E_{t}\left(\delta_{s,1,\beta_{1}}\times\cdots\times\delta_{s,n,\beta_{n}}\right)\,V_{j_{1}}\left(\prod_{l=1}^{i_{1}-1}\left(E_{t,j_{1}}(\widetilde{\delta}_{i_{1},s,l,\gamma_{1,l}})V_{j_{1}}\right)\right)\\
\label{Thm1r7}&\times\prod_{q=1}^{k-1}\left\{E_{t,j_{q},j_{q}+1,\ldots,j_{q+1}}\left(\prod_{r=0}^{j_{q+1}-j_{q}}\delta_{q,s,j_{q}+r,\alpha_{q,j_{q}+r}}\right)V_{j_{q+1}}\prod_{l=1}^{i_{q+1}-1}\left(E_{t,j_{q+1}}(\widetilde{\delta}_{i_{q+1},s,l,\gamma_{q+1,l}})V_{j_{q+1}}\right)\right\}\Bigg].
\end{align}
Now applying \eqref{Lem6r4} to \eqref{Thm1r7}, we get
\begin{align}
\nonumber&\tr\left(D_{f_{N}}^{j_{1}^{i_{1}},\ldots,j_{k}^{i_{k}}}(t)\right)=\prod_{l=1}^{k}(i_{l}!)\lim_{s\to\infty}\sum_{\substack {1\,\le\beta_{1},\ldots,\beta_{n}\le s;\\\alpha_{h,j_{h}},\alpha_{h,j_{h}+1},\ldots,\alpha_{h,j_{h+1}}\,\le\,s,~1\le h\le k-1;\\ 1\le\gamma_{l,1},\gamma_{l,2},\ldots,\gamma_{l,i_{l}-1}\le s,~1\le l\le k}}\\
\nonumber&\Bigg\{f_{N}\Big(z_{s,1,\beta_{1}},\ldots,z_{s,j_{1}-1,\beta_{j_{1}-1}},\left[z_{s,j_{1},\beta_{j_{1}}},z_{1,s,j_{1},\alpha_{1,j_{1}}},\widetilde{z}_{i_{1},s,1,\gamma_{1,1}},\ldots,\widetilde{z}_{i_{1},s,i_{1}-1,\gamma_{1,i_{1}-1}}\right],z_{1,s,j_{1}+1,\alpha_{1,j_{1}+1}},\\
\nonumber&\ldots,z_{1,s,j_{2}-1,\alpha_{1,j_{2}-1}},\left[z_{1,s,j_{2},\alpha_{1,j_{2}}},z_{2,s,j_{2},\alpha_{2,j_{2}}},\widetilde{z}_{i_{2},s,1,\gamma_{2,1}},\ldots,\widetilde{z}_{i_{2},s,i_{2}-1,\gamma_{2,i_{2}-1}}\right],z_{2,s,j_{2}+1,\alpha_{2,j_{2}+1}},\ldots,\\
\nonumber&z_{k-1,s,j_{k}-1,\alpha_{k-1,j_{k}-1}},\hspace*{-0.1cm}\left[z_{k-1,s,j_{k},\alpha_{k-1,j_{k}}},z_{s,j_{k},\beta_{j_{k}}},\widetilde{z}_{i_{k},s,1,\gamma_{k,1}},\ldots,\widetilde{z}_{i_{k},s,i_{k}-1,\gamma_{k,i_{k}-1}}\right]\hspace*{-0.1cm},z_{s,j_{k}+1,\beta_{j_{k}+1}},\ldots,z_{s,n,\beta_{n}}\hspace*{-0.1cm}\Big)\hspace*{-0.15cm}\Bigg\}\\
\nonumber&\times\tr\Bigg[E_{t}\left(\delta_{s,1,\beta_{1}}\times\cdots\times\delta_{s,n,\beta_{n}}\right)\,V_{j_{1}}\left(\prod_{l=1}^{i_{1}-1}\left(E_{t,j_{1}}(\widetilde{\delta}_{i_{1},s,l,\gamma_{1,l}})V_{j_{1}}\right)\right)\\
\label{Thm1r8}&\times\prod_{q=1}^{k-1}\left\{E_{t,j_{q},j_{q}+1,\ldots,j_{q+1}}\left(\prod_{r=0}^{j_{q+1}-j_{q}}\delta_{q,s,j_{q}+r,\alpha_{q,j_{q}+r}}\right)V_{j_{q+1}}\prod_{l=1}^{i_{q+1}-1}\left(E_{t,j_{q+1}}(\widetilde{\delta}_{i_{q+1},s,l,\gamma_{q+1,l}})V_{j_{q+1}}\right)\right\}\Bigg].
\end{align}
Finally, applying Lemmas \ref{Lem5} and \ref{Lem3} to \eqref{Thm1r8} we obtain
\begin{flalign}
\nonumber&\left|\tr\left(D_{f}^{j_{1}^{i_{1}},\ldots,j_{k}^{i_{k}}}(t)\right)\right|\le\left(\prod_{l=1}^{k}\|V_{j_{l}}\|_{2}^{i_{l}}\right)\left\|\frac{\partial^{m}f}{\partial z_{j_{1}}^{i_{1}}\cdots\partial z_{j_{k}}^{i_{k}}}\right\|_{L^{\infty}(\cir^{n})}.
\end{flalign}
This completes the proof for the case $\bm{X}_{n}(t)$ is a tuple of commuting normal contractions.}
\end{subcase}}
\end{case}
\begin{case}
{\normalfont Assume that $\bm{X}_{n}(t)$ is only a tuple of commuting contractions having commuting normal dilation.
\vspace{0.1in}
	
\noindent Since there exists a tuple of commuting normal contractions $\bm{U}_{n}$ on a Hilbert space $\mathcal{K}\supset\hil$ such that it satisfies \eqref{Dilation}, then by using the cyclicity property of the trace we have
\begin{align}\label{Thm1new}
\nonumber&\tr\Bigg[T_{k_1,\ldots,k_{j_{1}-1},p_{1,0}}(\bm{X}_{j_{1}}(t))T_{p_{k,i_{k}},k_{j_{k}+1},\ldots,k_{n}}({}_{j_{k}}\bm{X}_{n}(t))V_{j_{1}}\left(\prod_{l=1}^{i_{1}-1}\left((X_{j_{1}}(t))^{p_{1,l}}V_{j_{1}}\right)\right)\\
\nonumber&\hspace{0.4in}\times \prod_{q=1}^{k-1}\left(T_{p_{q,i_{q}},k_{j_{q}+1},\ldots,k_{j_{q+1}-1},p_{q+1,0}}({}_{j_{q}}\bm{X}_{j_{q+1}}(t))V_{j_{q+1}}\prod_{l=1}^{i_{q+1}-1}\left((X_{j_{q+1}}(t))^{p_{q+1,l}}V_{j_{q+1}}\right)\right)\Bigg]\\
\nonumber&=\tr\Bigg[P_{\hil}T_{k_1,\ldots,k_{j_{1}-1},p_{1,0}}(\bm{U}_{j_{1}})T_{p_{k,i_{k}},k_{j_{k}+1},\ldots,k_{n}}({}_{j_{k}}\bm{U}_{n})V_{j_{1}}P_{\hil}\left(\prod_{l=1}^{i_{1}-1}\left((U_{j_{1}})^{p_{1,l}}V_{j_{1}}P_{\hil}\right)\right)\\
&\hspace{0.4in}\times \prod_{q=1}^{k-1}\left(T_{p_{q,i_{q}},k_{j_{q}+1},\ldots,k_{j_{q+1}-1},p_{q+1,0}}({}_{j_{q}}\bm{U}_{j_{q+1}})V_{j_{q+1}}P_{\hil}\prod_{l=1}^{i_{q+1}-1}\left((U_{j_{q+1}})^{p_{q+1,l}}V_{j_{q+1}}P_{\hil}\right)\right)\Bigg].
\end{align}
Therefore by Subcase \ref{Thm1r6}, from  \eqref{Thm1new} we conclude
\begin{align}
\nonumber\left|\tr\left(D_{f}^{j_{1}^{i_{1}},\ldots,j_{k}^{i_{k}}}(t)\right)\right|&\le\left(\prod_{l=1}^{k}\|V_{j_{l}}P_{\hil}\|_{2}^{i_{l}}\right)\left\|\frac{\partial^{m}f}{\partial z_{j_{1}}^{i_{1}}\cdots\partial z_{j_{k}}^{i_{k}}}\right\|_{L^{\infty}(\cir^{n})}\le\left(\prod_{l=1}^{k}\|V_{j_{l}}\|_{2}^{i_{l}}\right)\left\|\frac{\partial^{m}f}{\partial z_{j_{1}}^{i_{1}}\cdots\partial z_{j_{k}}^{i_{k}}}\right\|_{L^{\infty}(\cir^{n})}.
\end{align}
Similarly, we also derive the estimate for $\left|\tr\left(D_{f}^{j^{m}}(t)\right)\right|$. This completes the proof.}\vspace*{-0.7cm}
\end{case}
\end{proof}
In the next lemma, we extend the result of {\cite[Lemma 4.7]{Sk15}} by showing that the higher-order Taylor remainder can be represented as the integral representations in terms of the higher-order derivative.
\begin{lma}\label{Lem7}
Assume Notations \ref{Notations}. Let $V_{j}\in\mLL(\Mcal, \tr)$ for $1\le j\le n$. Then for every $f\,\in\mathcal{A}(\D),\,\epsilon>0$, and $m\ge 2$
\begin{equation}\label{Lem7r1}
\tr\left[f(\bm{B}_n)-\sum_{k=0}^{m-1}\frac{1}{k!}\,\frac{d^k}{ds^k}\bigg|_{s=0}f(\bm X_n(s))\right]=\int_{0}^{1}\frac{(1-t)^{m-1}}{(m-1)!}\tr\left(\dmds\bigg|_{s=t}^{}f(\bm{X}_n(s))\right)dt.
\end{equation}
\end{lma}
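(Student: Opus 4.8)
The plan is to treat $G(t):=f(\bm{X}_n(t))$ as an operator-valued function on $[0,1]$, apply the vector-valued Taylor theorem with integral remainder, and then pass to the trace. By Lemma~\ref{Lem2}, for $f\in\mathcal{A}(\D)$ the map $G$ is $m$-times differentiable on $[0,1]$ with $G^{(k)}(t)=\frac{d^{k}}{ds^{k}}\big|_{s=t}f(\bm{X}_n(s))$, and each derivative $G^{(k)}$, $1\le k\le m$, is continuous in the operator norm; hence $G\in C^{m}\big([0,1];\Mcal\big)$. The standard Taylor formula with integral remainder, valid in any Banach space for a $C^{m}$ curve, then yields in the operator norm
\begin{equation}
f(\bm{B}_n)-\sum_{k=0}^{m-1}\frac{1}{k!}\,\frac{d^{k}}{ds^{k}}\bigg|_{s=0}f(\bm{X}_n(s))=\int_{0}^{1}\frac{(1-t)^{m-1}}{(m-1)!}\,\frac{d^{m}}{ds^{m}}\bigg|_{s=t}f(\bm{X}_n(s))\,dt,
\tag{$\star$}
\end{equation}
where I have used $\bm{X}_n(1)=\bm{B}_n$, since $X_j(1)=A_j+V_j=B_j$ for $1\le j\le n$.

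Next I would promote the right-hand integral of $(\star)$ to the trace ideal. The delicate feature here is that the lower-order terms need not be trace class: for $k=1$ the operator $\frac{d}{ds}\big|_{s=0}f(\bm{X}_n(s))$ carries a single factor $V_j\in\mLL(\Mcal,\tr)$ and so lies only in $\mLL(\Mcal,\tr)$, which is exactly why $(\star)$ must be established in the operator norm rather than in $\|\cdot\|_1$. However, the top-order derivative does belong to $\mL(\Mcal,\tr)$: since $V_j\in\mLL(\Mcal,\tr)$ and $m\ge 2$ (so that $r=2\le m$), Lemma~\ref{Lem2} guarantees that $t\mapsto\frac{d^{m}}{ds^{m}}\big|_{s=t}f(\bm{X}_n(s))$ is continuous on $[0,1]$ in $\|\cdot\|_1$. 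Therefore the Riemann sums approximating the integral in $(\star)$ converge simultaneously in the operator norm and in $\|\cdot\|_1$; as both topologies are stronger than convergence in measure in the algebra of $\tau$-measurable operators, the two limits must agree. Consequently the integral in $(\star)$ is a well-defined element of $\mL(\Mcal,\tr)$, and hence so is the Taylor remainder on the left.

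Finally I would apply $\tr$ to $(\star)$. Because $\tr$ is a bounded linear functional on $\mL(\Mcal,\tr)$ and the integrand is $\|\cdot\|_1$-continuous, the trace commutes with the (Bochner) integral, yielding exactly \eqref{Lem7r1}. The one genuinely subtle point, and the part I would spell out with care, is this clash of norms: the Taylor expansion must be carried out in the operator norm because $G^{(1)}$ fails to be trace class, whereas both the trace-class nature of the remainder and the legitimacy of interchanging $\tr$ with the integral rest solely on the $\|\cdot\|_1$-continuity of the highest derivative supplied by Lemma~\ref{Lem2}. Reconciling the two regimes through convergence in measure is the crux of the argument; everything else is the classical scalar Taylor remainder formula transported to the operator setting.
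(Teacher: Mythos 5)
Your proof is correct, but it follows a genuinely different route from the paper. The paper proceeds by induction on $m$: the base case $m=2$ is imported from \cite[Lemma 4.7]{Sk15}, and the inductive step subtracts the $(r-1)$-st Taylor term, uses that $\frac{d^{r-1}}{ds^{r-1}}\big|_{s=t}f(\bm{X}_n(s))\in\mL(\Mcal,\tr)$ once $r-1\ge 2$, invokes the identity $\frac{d}{dt}\tr\big(\frac{d^{r-1}}{ds^{r-1}}\big|_{s=t}f(\bm{X}_n(s))\big)=\tr\big(\frac{d^{r}}{ds^{r}}\big|_{s=t}f(\bm{X}_n(s))\big)$ from Lemma~\ref{Lem2}, and finishes with a scalar integration by parts; this is precisely why the paper's subsequent remark notes the method breaks down at $m=2$, where the first derivative carries only one Hilbert--Schmidt factor and is not trace class. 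You instead prove the operator-level Taylor formula $(\star)$ in one shot, in the operator norm, via the Banach-space Taylor theorem (justified by the operator-norm $C^m$ regularity in Lemma~\ref{Lem2}), and then upgrade: the top derivative is $\|\cdot\|_1$-continuous (Lemma~\ref{Lem2} with $r=2\le m$), the Riemann sums converge in both $\|\cdot\|_\infty$ and $\|\cdot\|_1$, and since both topologies dominate the (Hausdorff) measure topology on $\tau$-measurable operators the two limits coincide, so the whole remainder lies in $\mL(\Mcal,\tr)$ and the trace passes through the Bochner integral. Your identification of the norm clash as the crux is exactly right, and your resolution is sound. What your approach buys: it is self-contained, handles all $m\ge 2$ uniformly (including $m=2$, since you never need lower-order derivatives to be trace class), and dispenses with both the induction and the external base case. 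What the paper's approach buys: after the base case, it operates purely at the scalar level -- ordinary integration by parts plus the trace-differentiation identity -- and thus avoids vector-valued integration machinery (Bochner integrals, the measure topology on $\tau$-measurable operators) altogether.
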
	
\begin{proof}
The proof of \eqref{Lem7r1} goes by the induction on $m$. \cite[Lemma 4.7]{Sk15} provides the base of the induction for $m=2$. Suppose the formula \eqref{Lem7r1} holds for $m=r-1$, that is,
\begin{align}
\label{Lem7r2}&\tr\left[f(\bm{B}_n)-\sum_{k=0}^{r-2}\frac{1}{k!}\,\frac{d^k}{ds^k}\bigg|_{s=0}f(\bm X_n(s))\right]=\int_{0}^{1}\frac{(1-t)^{r-2}}{(r-2)!}\tr\left(\frac{d^{r-1}}{ds^{r-1}}\bigg|_{s=t}f(\bm{X}_n(s))\right)dt.	
\end{align}	
Since $\drrds\bigg|_{s=t}f(\bm{X}_n(s))\in\mL(\Mcal,\tr)$ for $r> 2,$ then from \eqref{Lem7r2} we derive
\begin{align}
\nonumber&\tr\left[f(\bm{B}_n)-\sum_{k=0}^{r-1}\frac{1}{k!}\,\frac{d^k}{ds^k}\bigg|_{s=0}f(\bm X_n(s))\right]\\
\label{Lem7r3}&=\int_{0}^{1}\frac{(1-t)^{r-2}}{(r-2)!}\tr\left(\drrds\bigg|_{s=t}^{}f(\bm{X}_n(s))\right)dt-\frac{1}{(r-1)!}\tr\left(\drrds\bigg|_{s=0}^{}f(\bm{X}_n(s))\right).
\end{align}	
Finally using Lemma \ref{Lem2}, we obtain
\begin{equation}\label{Lem7r4}
\ddt\tr\left(\drrds\bigg|_{s=t}^{}f(\bm{X}_n(s))\right)=\tr\left(\drds\bigg|_{s=t}^{}f(\bm{X}_n(s))\right).
\end{equation}
Therefore, by performing integration by parts in \eqref{Lem7r3}, we conclude \eqref{Lem7r1} for $m=r$. Therefore the result follows by the principle of mathematical induction. This completes the proof. 
\end{proof}
\begin{rmrk}
The method used in the proof of the above Lemma \ref{Lem7} is no longer valid for $m=2$. However it can be used for the case $m=2$ by considering $V_{j}\in\mL(\Mcal,\tr),~1\le j\le n$.
\end{rmrk}

The following is the main result in this article.
\begin{thm}\label{Thm2}
Assume Notations \ref{Notations}, $m\ge 2$ and $V_{j}\in\mLL(\Mcal,\tr)$ for $1\le j\le n$. Consider either $\tr$ is the standard trace or $m=2$. If $\{\bm{A}_{n}, \bm{B}_{n}\}\in\mathcal{N}_{n},$ then there exist finite measures $\mu_{j_{1}^{i_{1}}\cdots j_{k}^{i_{k}}}$ on $\Omega^{n}$ for $1\leq k\leq m$, $1\le j_1<\cdots< j_k \le n$, and $1\leq i_1,\ldots,i_k\leq m$ with $i_1+i_2+\cdots+i_k=m$  such that
\begin{align}
\label{Thm2r1}&\left\|\mu_{j_{1}^{i_{1}}\cdots j_{k}^{i_{k}}}\right\|\le\frac{1}{m!}\left(\prod_{l=1}^{k}\|V_{j_{l}}\|_{2}^{i_{l}}\right),
\end{align}
and
\begin{align}
\nonumber&\tr\left[f(\bm{B}_n)-\sum_{k=0}^{m-1}\frac{1}{k!}\,\frac{d^k}{ds^k}\bigg|_{s=0}f(\bm X_n(s))\right]=\\
\label{Thm2r3}&\sum_{k=1}^{m}\left[\sum_{1\le j_{1}<\cdots<j_{k}\le n}^{}\left(\sum_{\substack{i_{1},\ldots,i_{k}\ge 1;\\i_1+\cdots+i_{k}=m}}\frac{m!}{i_{1}!\cdots i_{k}!}\left(\int_{\Omega^{n}}\frac{\partial^{m}f}{\partial z_{j_{1}}^{i_{1}}\cdots\partial z_{j_{k}}^{i_{k}}}(z_1,\ldots,z_n)\,\,d\mu_{j_{1}^{i_{1}}\ldots j_{k}^{i_{k}}}(z_1,\ldots,z_n)\right)\right)\right],
\end{align}
for every $f\,\in\mathcal{A}(\D),\,\epsilon>0$, and $\Omega=\cir$.

In addition, if $\bm{A}_{n}$ and $\bm{B}_{n}$ are tuples of self-adjoint contractions, then there exist real-valued measures $\mu_{j_{1}^{i_{1}}\cdots j_{k}^{i_{k}}},$ such that \eqref{Thm2r1} and \eqref{Thm2r3} hold with $\Omega=[-1, 1]$ for $f\in\mathcal{A}(\Ra),\,\epsilon>0.$ 
\end{thm}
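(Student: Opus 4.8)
The plan is to feed the integral representation of the Taylor remainder (Lemma \ref{Lem7}), the explicit expansion of the $m$-th derivative (Lemma \ref{Lem2}), and the principal trace estimate (Theorem \ref{Thm1}) into a single Hahn--Banach/Riesz--representation argument that produces the measures. First I would apply Lemma \ref{Lem7} to rewrite the left-hand side of \eqref{Thm2r3} as $\int_{0}^{1}\frac{(1-t)^{m-1}}{(m-1)!}\,\tr\!\left(\dmds\big|_{s=t}f(\X(s))\right)dt$ and then substitute the formula \eqref{Lem2r1}. Because $\{\A,\B\}\in\mathcal{N}_{n}$ forces every $\X(t)$, $t\in[0,1]$, to satisfy \eqref{Dilation}, and because the outer sums over $k$, over $1\le j_{1}<\cdots<j_{k}\le n$, and over $i_{1}+\cdots+i_{k}=m$ are all finite, I may interchange them with the $t$-integral at no cost, obtaining
\[
\sum_{k=1}^{m}\sum_{1\le j_{1}<\cdots<j_{k}\le n}\,\sum_{\substack{i_{1},\ldots,i_{k}\ge1\\ i_{1}+\cdots+i_{k}=m}}\frac{m!}{i_{1}!\cdots i_{k}!}\,L_{j_{1}^{i_{1}}\cdots j_{k}^{i_{k}}}(f),\qquad L_{j_{1}^{i_{1}}\cdots j_{k}^{i_{k}}}(f):=\int_{0}^{1}\frac{(1-t)^{m-1}}{(m-1)!}\,\tr\!\left(D_{f}^{j_{1}^{i_{1}},\ldots,j_{k}^{i_{k}}}(t)\right)dt.
\]

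The next step is to estimate each linear functional $L_{j_{1}^{i_{1}}\cdots j_{k}^{i_{k}}}$. Since Theorem \ref{Thm1} applies uniformly in $t\in[0,1]$ and $\int_{0}^{1}(1-t)^{m-1}/(m-1)!\,dt=1/m!$, I obtain
\[
\left|L_{j_{1}^{i_{1}}\cdots j_{k}^{i_{k}}}(f)\right|\le\frac{1}{m!}\left(\prod_{l=1}^{k}\|V_{j_{l}}\|_{2}^{i_{l}}\right)\left\|\frac{\partial^{m}f}{\partial z_{j_{1}}^{i_{1}}\cdots\partial z_{j_{k}}^{i_{k}}}\right\|_{L^{\infty}(\Omega^{n})},
\]
which is precisely the bound \eqref{Thm2r1} I want the measures to carry.

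The decisive observation is that this estimate shows $L_{j_{1}^{i_{1}}\cdots j_{k}^{i_{k}}}(f)$ depends on $f$ only through the single function $g_{f}:=\frac{\partial^{m}f}{\partial z_{j_{1}}^{i_{1}}\cdots\partial z_{j_{k}}^{i_{k}}}$ restricted to $\Omega^{n}$: if two admissible functions share this mixed partial on $\Omega^{n}$, then by linearity of $f\mapsto D_{f}^{\cdots}(t)$ the difference has vanishing partial, so the bound forces the corresponding values of $L$ to agree. Hence $g_{f}\mapsto L_{j_{1}^{i_{1}}\cdots j_{k}^{i_{k}}}(f)$ is a well-defined linear functional on the subspace $\mathcal{G}:=\{g_{f}:f\in\mathcal{A}(\D)\}\subseteq C(\Omega^{n})$, bounded in the sup-norm with norm at most $\frac{1}{m!}\prod_{l}\|V_{j_{l}}\|_{2}^{i_{l}}$. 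I then extend it to all of $C(\Omega^{n})$ by the Hahn--Banach theorem without increasing its norm, and the Riesz representation theorem supplies a finite complex Borel measure $\mu_{j_{1}^{i_{1}}\cdots j_{k}^{i_{k}}}$ on $\Omega^{n}$ satisfying \eqref{Thm2r1} and $\int_{\Omega^{n}}g_{f}\,d\mu_{j_{1}^{i_{1}}\cdots j_{k}^{i_{k}}}=L_{j_{1}^{i_{1}}\cdots j_{k}^{i_{k}}}(f)$. Substituting these identities back into the finite sum above yields exactly \eqref{Thm2r3}.

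For the self-adjoint case I would run the identical argument over the real Banach space $C([-1,1]^{n};\R)$: when $\A,\B$ consist of self-adjoint contractions the spectral measures appearing in Theorem \ref{Thm1} are supported on $[-1,1]^{n}$ and the functionals $L_{j_{1}^{i_{1}}\cdots j_{k}^{i_{k}}}$ are real on real-valued $f\in\mathcal{A}(\Ra)$, so the real Hahn--Banach and Riesz--Markov theorems deliver real-valued (signed) measures obeying the same bound with $\Omega=[-1,1]$. The only genuinely delicate point in the whole scheme is the well-definedness of the functional on $\mathcal{G}$ --- that distinct ``antiderivatives'' sharing a given mixed partial are identified --- and this is handed to us for free by the estimate \eqref{Thm1r1}; everything else reduces to bookkeeping with the finite combinatorial sums and standard functional-analytic machinery.
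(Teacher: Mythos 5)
Your proposal is correct and follows essentially the same route as the paper: combine Lemma \ref{Lem7} with the expansion \eqref{Lem2r1}, bound the resulting functionals via Theorem \ref{Thm1} and $\int_0^1\frac{(1-t)^{m-1}}{(m-1)!}dt=\frac{1}{m!}$, and then invoke Hahn--Banach and Riesz--Markov on the span of the mixed partials to produce the measures. The only difference is cosmetic: you spell out the well-definedness of the functional $g_f\mapsto L_{j_1^{i_1}\cdots j_k^{i_k}}(f)$ (that functions sharing a mixed partial on $\Omega^n$ get the same value, forced by \eqref{Thm1r1}), a point the paper leaves implicit when defining its functionals $\phi_{j_1^{i_1}\ldots j_k^{i_k}}$ on that span.
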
	
\begin{proof}
From Lemmas \ref{Lem2} and \ref{Lem7} we obtain
\begin{align}
\nonumber&\tr\left[f(\bm{B}_n)-\sum_{k=0}^{m-1}\frac{1}{k!}\,\frac{d^k}{ds^k}\bigg|_{s=0}f(\bm X_n(s))\right]\\
\label{Thm2r4}&\quad=\sum_{k=1}^{m}\left[\sum_{1\le j_{1}<\cdots<j_{k}\le n}^{}\left(\sum_{\substack{i_{1},\ldots,i_{k}\ge 1;\\i_1+\cdots+i_{k}=m}}\frac{m!}{i_{1}!\cdots i_{k}!}\int_{0}^{1}\frac{(1-t)^{m-1}}{(m-1)!}\tr\left(D_{f}^{j_{1}^{i_{1}},\ldots,j_{k}^{i_{k}}}(t)\right)dt\right)\right].
\end{align}
Now consider the collection of linear functionals $$\phi_{j_{1}^{i_{1}}\ldots j_{k}^{i_{k}}}:\text{span}\left\{\frac{\partial^{m}f}{\partial z_{j_{1}}^{i_{1}}\cdots\partial z_{j_{k}}^{i_{k}}}\,:\,\,f\in\mathcal{A}(\D)\right\}\to\C$$ defined by 
\begin{align}\label{Thm2new}
\phi_{j_{1}^{i_{1}}\ldots j_{k}^{i_{k}}}\left(\frac{\partial^{m}f}{\partial z_{j_{1}}^{i_{1}}\cdots\partial z_{j_{k}}^{i_{k}}}\right)=\int_{0}^{1}\frac{(1-t)^{m-1}}{(m-1)!}\tr\left(D_{f}^{j_{1}^{i_{1}},\ldots,j_{k}^{i_{k}}}(t)\right)dt,
\end{align}
for $1\le k\le m,~1\le j_{1}<\cdots<j_{k}\le n$, and $1\le i_{1},\ldots,i_{k}\le m$ such that $\sum_{l=1}^{k}i_{l}=m.$ 
Hence by applying Theorem \ref{Thm1} to \eqref{Thm2new} we have
\begin{align}
\label{Thm2r5}&\left|\phi_{j_{1}^{i_{1}}\ldots j_{k}^{i_{k}}}\left(\frac{\partial^{m}f}{\partial z_{j_{1}}^{i_{1}}\cdots\partial z_{j_{k}}^{i_{k}}}\right)\right|\le\frac{1}{m!}\left(\prod_{l=1}^{k}\|V_{j_{l}}\|_{2}^{i_{l}}\right)\left\|\frac{\partial^{m}f}{\partial z_{j_{1}}^{i_{1}}\cdots\partial z_{j_{k}}^{i_{k}}}\right\|_{L^{\infty}(\cir^{n})}.
\end{align}	
Therefore, by using the Hahn-Banach theorem and the Riesz-Markov representation theorem for $C(\cir^{n}),$ we conclude the existence of measures $\mu_{j_{1}^{i_{1}}\cdots j_{k}^{i_{k}}}$ on $\cir^{n}$ corresponding to the functionals $\phi_{j_{1}^{i_{1}}\ldots j_{k}^{i_{k}}}$ satisfying \eqref{Thm2r1}, and
\begin{equation}\label{Thm2r6}
\phi_{j_{1}^{i_{1}}\ldots j_{k}^{i_{k}}}\left(\frac{\partial^{m}f}{\partial z_{j_{1}}^{i_{1}}\cdots\partial z_{j_{k}}^{i_{k}}}\right)=\int_{\cir^{n}}\frac{\partial^{m}f}{\partial z_{j_{1}}^{i_{1}}\cdots\partial z_{j_{k}}^{i_{k}}}(z_{1},\ldots,z_{n})\,d\mu_{j_{1}^{i_{1}}\cdots j_{k}^{i_{k}}}(z_{1},\ldots,z_{n}),
\end{equation}
for every $1\le k\le m,~1\le j_{1}<\cdots<j_{k}\le n$, and $1\le i_{1},\ldots,i_{k}\le m$ such that $\sum_{l=1}^{k}i_{l}=m.$
Finally, the application of \eqref{Thm2r6} to \eqref{Thm2r4} concludes \eqref{Thm2r3} for $\Omega=\cir$. The proof goes analogously for $\Omega=[-1, 1]$. This completes the proof. 
\end{proof}		
Below, using the spectral shift measures $\mu_{j^{m}}$ obtained in Theorem \ref{Thm2}, we relate them to the spectral shift function associated with the pair $(A_{j} ,B_{j})$ for $1\le j\le n.$
\begin{crl}\label{Crl1}
Assume Notations \ref{Notations}, $g\in\mathcal{A}(\mathbb{D}_{1+\epsilon}),~\epsilon>0$, and $V_{j}\in\mLL(\Mcal,\tr)$ for $1\le j\le n$. Consider $\tr$ is the standard trace. Then there exist finite measures $\mu_{j}^{m}$ on $\Omega^{n}$ for $1\le j\le n$, such that
\begin{align}
\label{Crl1r1}&\int_{\Omega^{n}}g(z_{j})d\mu_{j^{m}}(z_{1},\ldots,z_{n})=\int_{\Omega} g(z)\eta_{m,j,A_{j},B_{j}}(z)dz,
\end{align}
where $\eta_{m,j,A_{j},B_{j}}$ is the spectral shift function corresponding to the self-adjoint pair $(A_{j},B_{j})$ for $\Omega=[-1,1]$, and associated with the pair of contractions $(A_{j},B_{j})$ for $\Omega=\cir$ respectively.
\end{crl}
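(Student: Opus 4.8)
The plan is to specialize the multivariate trace formula of Theorem \ref{Thm2} to functions that depend on a single coordinate, and then to recognize the resulting identity as an instance of the known single-variable higher-order trace formula. Fix $1\le j\le n$ and let $h\in\mathcal{A}(\mathbb{D}_{1+\epsilon})$ be a single-variable function; define $f(z_1,\ldots,z_n):=h(z_j)$. Since $h$ has an absolutely convergent power series on $\mathbb{D}_{1+\epsilon}$, the function $f$ belongs to $\mathcal{A}(\D)$ (respectively to $\mathcal{A}(\Ra)$ in the self-adjoint regime), so Theorem \ref{Thm2} applies to it.

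First I would substitute this $f$ into \eqref{Thm2r3}. Every mixed partial derivative $\frac{\partial^{m}f}{\partial z_{j_{1}}^{i_{1}}\cdots\partial z_{j_{k}}^{i_{k}}}$ vanishes unless $k=1$, $j_1=j$, and $i_1=m$, in which case it equals $h^{(m)}(z_j)$ and is weighted by the coefficient $\frac{m!}{m!}=1$. Hence the entire right-hand side of \eqref{Thm2r3} collapses to $\int_{\Omega^{n}}h^{(m)}(z_j)\,d\mu_{j^m}(z_1,\ldots,z_n)$, involving only the measure $\mu_{j^m}$. On the left-hand side, $f(\bm{B}_n)=h(B_j)$ and $f(\bm{X}_n(s))=h(A_j+sV_j)$ because $f$ ignores all coordinates other than the $j$-th; since $B_j=A_j+V_j$, the bracketed expression is exactly the single-variable Taylor remainder $\mathscr{R}_{m,A_j,h}(V_j)$ of \eqref{TR}. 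Thus Theorem \ref{Thm2} yields
\[
\tr\big(\mathscr{R}_{m,A_j,h}(V_j)\big)=\int_{\Omega^{n}}h^{(m)}(z_j)\,d\mu_{j^m}(z_1,\ldots,z_n).
\]

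Next I would invoke the single-variable theory. Because $\tr$ is the standard trace and $V_j\in\mathcal{B}_2(\hil)$, the pair $(A_j,B_j)$ falls under the Hilbert--Schmidt higher-order trace formulas of \cite{DySk09,PoSkSu14}: there exists a spectral shift function $\eta_{m,j,A_j,B_j}$ (supported on $[-1,1]$ in the self-adjoint case and on $\cir$ in the contraction case) with $\Tr\big(\mathscr{R}_{m,A_j,h}(V_j)\big)=\int_{\Omega}h^{(m)}(z)\,\eta_{m,j,A_j,B_j}(z)\,dz$. Combining this with the displayed identity gives $\int_{\Omega^{n}}h^{(m)}(z_j)\,d\mu_{j^m}=\int_{\Omega}h^{(m)}(z)\,\eta_{m,j,A_j,B_j}(z)\,dz$ for every $h\in\mathcal{A}(\mathbb{D}_{1+\epsilon})$. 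Finally, writing $g:=h^{(m)}$ and observing that $m$-fold integration carries any $g\in\mathcal{A}(\mathbb{D}_{1+\epsilon})$ back to such an $h$ (antiderivatives of functions analytic on a disc remain analytic on the same disc), so that $h\mapsto h^{(m)}$ is onto $\mathcal{A}(\mathbb{D}_{1+\epsilon})$, establishes \eqref{Crl1r1}.

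The main obstacle is not the computation but verifying that the single-variable inputs genuinely apply: one must confirm that the standard-trace, Hilbert--Schmidt hypotheses here match those of \cite{DySk09,PoSkSu14}, and crucially that their spectral shift measures are absolutely continuous with respect to Lebesgue measure, so that the density $\eta_{m,j,A_j,B_j}$ and the $dz$-integral in \eqref{Crl1r1} are legitimate, in both the self-adjoint ($\Omega=[-1,1]$) and the contraction ($\Omega=\cir$) regimes. The surjectivity of $h\mapsto h^{(m)}$ onto the relevant function class is then immediate and causes no difficulty.
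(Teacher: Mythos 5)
Your proposal is correct and follows essentially the same route as the paper's own proof: specialize the trace formula \eqref{Thm2r3} to a function of the single coordinate $z_j$ (so that only the $\mu_{j^m}$ term survives), identify the left-hand side with the single-variable Taylor remainder $\tr\big(\mathscr{R}_{m,A_j,h}(V_j)\big)$, and invoke the single-variable results (\cite[Theorem 1.3]{PoSkSu14} for contractions, and \cite[Theorem 1.1]{PoSkSu13} for the self-adjoint case) to produce the density $\eta_{m,j,A_j,B_j}$. Your extra care about surjectivity of $h\mapsto h^{(m)}$ and about matching the Hilbert--Schmidt hypotheses (which holds since $\mathcal{B}_2(\hil)\subseteq\mathcal{B}_m(\hil)$ for $m\ge 2$) only makes explicit what the paper leaves implicit.
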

\begin{proof}
Let $\Omega=\cir$. Consider $f(z_{1},\ldots,z_{n})=f(z_{j})$ satisfying $\frac{\partial^{m}f}{\partial z_{j}^{m}}=g.$ Then \eqref{Thm2r3} reduces to 
\begin{align}
\label{Crl1r2}&\tr\left[f(B_{j})-\sum_{k=0}^{m-1}\frac{1}{k!}\,\frac{d^k}{ds^k}\bigg|_{s=0}f(X_j(s))\right]=\int_{\cir^{n}}g(z_{j})\,d\mu_{j}^{m}(z_1,\ldots,z_n).
\end{align}
By \cite[Theorem 1.3]{PoSkSu14}, there exists a function $\eta_{m,j,A_{j},B_{j}}\in L^{1}(\cir)$ corresponding to the pair of contractions $(A_{j},B_{j})$ such that 
\begin{align}
\nonumber&\int_{\cir^{n}}g(z_{j})d\mu_{j}^{m}(z_{1},\ldots,z_{n})=\int_{\cir} g(z)\eta_{m,j,A_{j},B_{j}}(z)dz.
\end{align}
Similar argument also holds for the case $\Omega=[-1,1]$, where the spectral shift function $\eta_{m,j,A_{j},B_{j}}$ is provided by \cite[Theorem 1.1]{PoSkSu13}.
\end{proof}

\begin{rmrk}
Although our main results Theorem \ref{Thm1} and Theorem \ref{Thm2} are obtained for commuting $n$-tuple of contractions but the results can be improved for any commuting $n$-tuple of bounded operators associated with the function class $\mathcal{A}(\mathbb{D}^{n}_{r+\epsilon}),~\epsilon>0$, where $\mathbb{D}_{r+\epsilon}^{n}$ is a polydisc with radius $(r+\epsilon)$, where $r=\max_{1\le j\le n,\,t\in[0,1]}\|X_{j}(t)\|.$ The restrictions on $m$ for the general trace $\tr$ is purely technical. The main problem we face here is that, when $\tr$ is the general trace and $m>2$, then the Lemma \ref{Lem3} is no longer valid. As a consequence, one needs some new ideas to extend the results of  Theorems \ref{Thm1} and \ref{Thm2} for general trace $\tr$ with perturbation in the Schatten ideal $\mathcal{L}_{p}(\Mcal,\tr)$ for $p\in[1,\infty)$. We would also like to mention that, our techniques are unable to conclude the absolute continuity of our obtained measures in Theorem~\ref{Thm2} with respect to the Lebesgue measure, and we leave this as a subject of future investigation. 
\end{rmrk}

\section*{Acknowledgements}
\textit{The first and third authors gratefully acknowledge the support provided by IIT Guwahati, Government of India. The second author delightedly acknowledges the support provided by the Prime Minister's Research Fellowship (PMRF),  Government of India.}

\end{document}